\numberwithin{equation}{section} 
\numberwithin{figure}{section} 
  \theoremstyle{plain}
  \newtheorem{thm}{Theorem}[section]
  \theoremstyle{plain}
  \theoremstyle{plain}
  \newtheorem{prop}[thm]{Proposition}
  \theoremstyle{Remark}
  \newtheorem{rem}[thm]{Remark}
  \theoremstyle{remark}
  \theoremstyle{plain}
  \newtheorem{lem}[thm]{Lemma}
  \newtheorem{mydef}{Definition}
\def\bfR#1{{\bf R}^#1}
\def\com#1{ \hbox{#1}}
\def\<{{\langle }}
\def\>{{\rangle }}
\def\bfR#1{{\bf R}^#1}
\def\com#1{ \quad\hbox{#1}\quad}
\def\<{{\langle }}
\def\>{{\rangle }}
\begin{document}

\title[helicoidal rotating drops]{Rotating Drops with Helicoidal Symmetry}

\author{Bennett Palmer and  Oscar M. Perdomo}

\date{\today}

\curraddr{O. Perdomo\\
Department of Mathematics\\
Central Connecticut State University\\
New Britain, CT 06050 USA\\
e-mail: perdomoosm@ccsu.edu
}
\curraddr{B. Palmer\\
Department of Mathematics\\
Idaho  State University\\
Pocatello, ID, 83209 USA\\
e-mail: palmbenn@isu.edu
}


\subjclass[2000]{ 53C42, 53C10}

\maketitle

\begin{abstract}

We  consider helicoidal immersions in $\bfR{3}$ whose axis of symmetry is the $z$-axis that are solutions of the equation  $2 H=\Lambda_0-a \frac{R^2}{2}$ where $H$ is the mean curvature of the surface, $R$ is the distance form the point in the surface to the $z$-axis and $a$ is a real number. We refer to these surfaces as helicoidal rotating drops. We  prove the existence of properly immersed solutions that contain the $z$-axis. We also show the existence of several families of embedded examples. We  describe the set of possible solutions and we  show that most of these solutions are not properly immerse and are dense in the region bounded by two concentric cylinders. 
We show that all properly immersed solutions, besides being invariant under a one parameter helicoidal group, they are invariant under a cyclic group of rotations of the variables $x$ and $y$.

The second variation of energy for the volume constrained problem with Dirichlet boundary conditions is also studied.
\end{abstract}
\begin{figure}[h]\label{bounds}
\centerline{ \includegraphics[width=5cm,height=5cm]{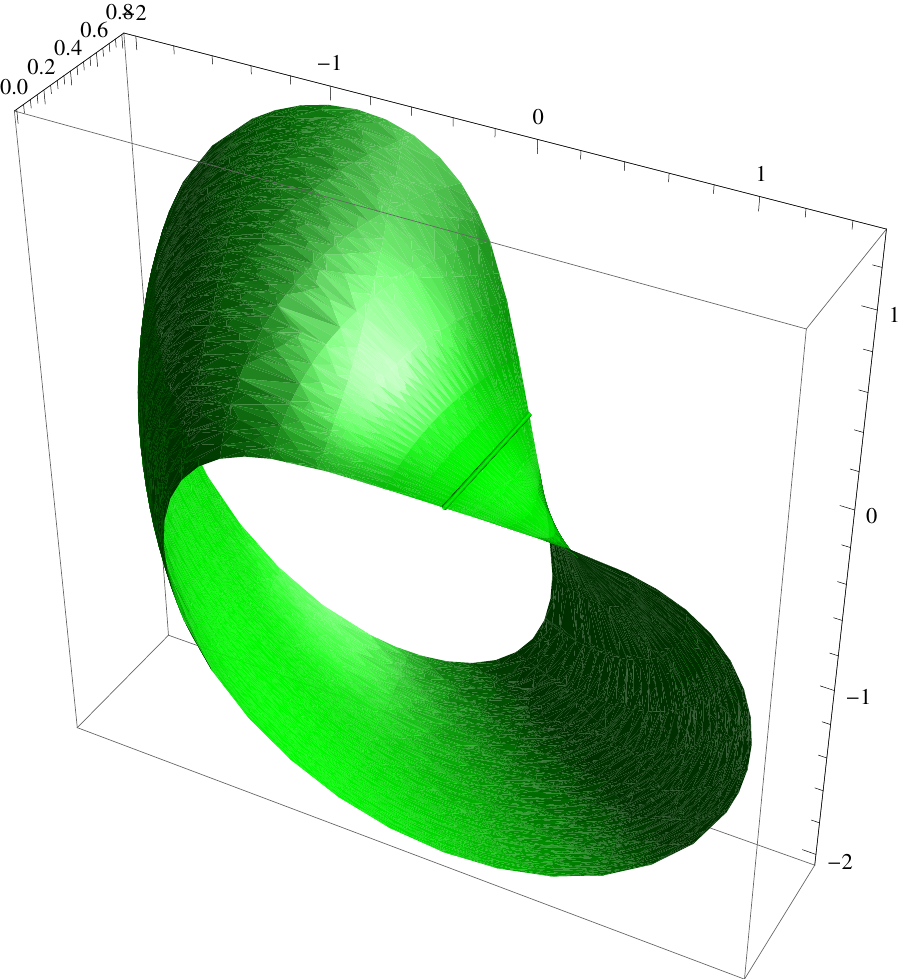}}

\end{figure}

\section{Introduction}

In this paper we  study the equilibrium shape of a rotating liquid drop or liquid film, which is invariant under a helicoidal motion of the three dimensional Euclidean space.
The subject of rotating drops has been studied by many authors, including  Chandrasekhar \cite{SC}, Brown and Scriven \cite{BS}, Solonikov \cite{S} and many others. 
Our main objective here is to use a new construction, recently developed by the second author \cite{P1}, to construct an abundant supply of examples. This construction is closely related to the Delaunay's classical construction of the axially symmetric constant mean curvature surfaces whose generating curves are produced by rolling a conic section. A special case of the type of surface which we study here, occurs when the rotating drop is a cylinder over a plane curve. We treat that case in detail in \cite{PP}.

If a rigid object is moved from one position in space to another, this repositioning can be realized via a helicoidal motion of ${\bf R}^3$. If then, this motion is successively repeated,
one arrives at a configuration which is invariant under a helicoidal motion. The simple idea that helicoidal motions give all repeated motions of a rigid object,  known in the physical sciences as  Pauling's Theorem,  is behind many of the occurrences of helicoidal symmetry in nature, since it allows for extensive growth with a minimal amount of information.  

We will consider the equilibrium shape of a liquid drop rotating with a constant angular velocity $\Omega$ about a vertical axis. The surface of the drop, which we denote by $\Sigma$, is represented as a smooth surface. 
The bulk of the drop is assumed to be occupied by an imcomressible liquid of a constant mass density $\rho_1$ while the drop is surrounded by a fluid of constant mass density $\rho_2$.
Since the drop is liquid, its free surface energy is proportional to its surface area ${\mathcal A}$ and we take the constant of proportionality to be one. 
 The rotation contributes a  second energy term of the form $-\Omega^2 \Delta {\mathcal I}$, where $\Delta {\mathcal I}$ is  difference of moments of inertia about the vertical axis,
 $$\Delta {\mathcal I}:=(\rho_1-\rho_2) \int_UR^2\:dv\:. $$
 
  This term represents
 twice the rotational kinetic energy. 
 
 The total energy is thus of the form
 \begin{equation}
 \label{E}
 {\mathcal E}:={\mathcal A}-\frac{\Omega^2}{2}{\Delta \mathcal I}+\Lambda_0{\mathcal V}\:,
 \end{equation}
 where ${\mathcal V}$ denotes the volume of the drop and $\Lambda_0$ is a Lagrange multiplier. Let $\Delta \rho:=\rho_1-\rho_2$, then by introducing a constant  $a :=(\Delta \rho) \Omega^2$, we can write the functional in the form
  \begin{equation}
 \label{E}
{\mathcal E}_{a, \Lambda_0}= {\mathcal A}-\frac{a}{2} \int_U R^2\:dV+\Lambda_0 {\mathcal V}\:,\nonumber
 \end{equation}
where $U$ is the three dimensional region occupied by the bulk of the drop, and $R:=\sqrt{x^2+y^2}$.  

Since we want to consider both embedded and immersed surfaces, we will precisely define the last two terms in the energy in the following way. First define vector fields on ${\bf R}^3$ by
$$W=\nabla ' R^4/16=\frac{R^2}{4}(x_1, x_2, 0)\:, W_0=\nabla ' R^2/4=(1/2)(x_1, x_2, 0)\:.$$
If $\nabla'\cdot$ denotes the divergence operator on ${\bf R}^3$, then it is easily checked that  $\nabla ' \cdot W_0=1$ and $\nabla'\cdot W=R^2$ hold. 
We, then define
$${\mathcal V}:=\int_\Sigma W_0\cdot \nu\:d\Sigma\:,\quad \int_UR^2\:dv:=\int_\Sigma W\cdot \nu\:d\Sigma\:.$$
The definitions are valid so long as $\Sigma$ is immersed and oriented.

We will next derive the first variation of the functional given above. Let $X_\epsilon=X+\epsilon (\psi \nu +T)+...$ be a variation of $X$, where $\psi$ is a smooth function, $\nu$ is the unit normal to the surface and $T$
is a tangent vector field along $\Sigma$.  The first variation formula for the area gives,
\begin{eqnarray*}
\delta{\mathcal A}&=&-\int_\Sigma 2H \psi\:d\Sigma+\oint_{\partial \Sigma}  T\cdot n\:ds\\
&=&-\int_\Sigma 2H \psi\:d\Sigma+\oint_{\partial \Sigma}dX\times \nu \cdot T\:.
\end{eqnarray*}

We will show in the appendix that 
\begin{equation}
\label{dR} \delta \int_\Omega R^2\:dV=\int_\Sigma \psi R^2\:d\Sigma +\oint_{\partial \Sigma}dX\times W\cdot \delta X\:,\end{equation}
where $W$ is a vector field satisfying $\nabla '\cdot W  =R^2$ on ${\bf R}^3$,
and it is well known that the first variation of volume is

\begin{equation}
\label{dV}\delta {\mathcal V}=\int_\Sigma \psi \:d\Sigma +\oint _{\partial \Sigma} dX\times W_0 \cdot \delta X\:.\end{equation}

By combining the last three formulas, we arrive at
\begin{eqnarray}
\label{fv}
\delta {\mathcal E}_{a, \Lambda_0}&=&\int_\Sigma (-2H-\frac{a}{2}R^2+\Lambda_0)\psi\:d\Sigma\nonumber \\
&&+\oint_{\partial \Sigma_1} dX\times (\nu-\frac{a}{2} W+\Lambda_0 W_0)\cdot \delta X
\end{eqnarray}

Regardless of the boundary conditions, a necessary condition for an equilibrium is that in the interior of $\Sigma$, there holds
\begin{equation}
\label{EL}
2H=-\frac{a}{2}R^2+\Lambda_0\:.
\end{equation}

If we assume that the surface has free boundary contained in a supporting surface $S$ having outward unit normal $N$, then the admissible variations must satisfy
the condition $\delta X\cdot N\equiv 0$ on $\partial \Sigma$. In order that the boundary integral in (\ref{fv}) vanishes for all admissible variations, we must have that $dX\times \nu$
parallel to $N$ along the boundary which means that the surface $\Sigma$ meets the supporting surface $S$ in a right angle.

We now assume that an equilibrium surface $\Sigma$, i.e. a surface satisfying (\ref{EL}) is invariant under a helicoidal motion:
\begin{equation}
\label{HM} (x_1+ix_2, x_3)\mapsto (e^{-i\omega t}(x_1+ix_2), x_3+t)\:,\end{equation}
and we will  derive a conservation law which characterizes the equilibrium surfaces. We do not assume that the angular velocity $\omega$  which determines the pitch of the helicoidal surface is the same as the angular velocity $\Omega$ appearing above,

Let $\Sigma_1$ denote the compact region in $\Sigma$ bounded on the sides by two integral curves $C_1$ and $C_2$ of the Killing field ${\mathcal K}(X):=-\omega E_3\times X+E_3$
and bounded below and above by the horizontal planes $x_3=0$ and $x_3=2\pi/\omega$. Then, $\Sigma_1$ is a compact surface with oriented boundary $C_1+\alpha_1 -C_2
-\alpha_2$ where $\alpha_1$ and $\alpha_2$ are congruent arcs in the planes $x_3=2\pi / \omega$, $x_3=0$, respectively. By the calculations in the appendix,  we have, using (\ref {EL})
\begin{equation}
\label{de2}
\delta {\mathcal E}[\Sigma_0]=\oint_{\partial \Sigma_1} dX\times (\nu-\frac{a}{2} W+\Lambda_0 W_0)\cdot \delta X\:.
\end{equation}

If we take the variation with $\delta X=E_3$ then, since $E_3$ generates a translation, the first variation will vanish. Consequently, we obtain
\begin{equation}
\label{bint} 0=\oint_{\partial \Sigma_1}  dX\times (\nu-\frac{a}{2} \nabla'\frac{R^4}{16}+\Lambda_0 \nabla '\frac{R^2}{4})\cdot E_3\:.\end{equation}

Note that the integration over the 1-chain $\alpha_1-\alpha_2$ yields zero since the two arcs are congruent and are traversed in opposite directions. On $C_i$, $i=1,2$, we 
have $dX/dt=(-1)^{i+1} {\mathcal K}$.
\begin{eqnarray*}
 dX\times (\nu-\frac{a}{2} \nabla'\frac{R^4}{16}+\Lambda_0 \nabla '\frac{R^2}{4})\cdot E_3&=&(-1)^{i+1}  (-\omega E_3\times X+E_3)\times  (\nu-\frac{a}{2} \nabla'\frac{R^4}{16}+\Lambda_0 \nabla '\frac{R^2}{4})\cdot E_3\:dt\\
 &=&(-1)^{i}  (-\omega E_3\times X+E_3)\times E_3\cdot (\nu-\frac{a}{2} \nabla'\frac{R^4}{16}+\Lambda_0 \nabla '\frac{R^2}{4})\:dt\\
  &=&(-1)^{i+1} \omega (x_1, x_2, 0)\cdot (\nu-\frac{a}{8}R^2(x_1, x_2, 0)+\frac{\Lambda_0}{2} (x_1, x_2, 0))\:dt\\
    &=&(-1)^{i+1}\omega\bigl( (Q-x_3\nu_3)-\frac{a}{8}R^4+\frac{\Lambda_0R^2}{2}\bigr)\:dt\\
 \end{eqnarray*} 
where $Q=X\cdot \nu$ is the support function of the surface. Setting ${\hat Q}:=Q-x_3\nu_3$,  We can conclude from this that the integral
$$\int_{C_i} \bigl( {\hat Q}-\frac{a}{8}R^4+\frac{\Lambda_0R^2}{2}\bigr)\:dt\:\:,$$
is independent of $i$. Also, it is easily checked that the integrand is, in fact,  constant on each helix $C_i$ and we obtain the result that
\begin{equation}
\label{con}
2{\hat Q}+\Lambda_0R^2-a\frac{R^4}{4}\equiv {\rm constant}\:.\end{equation}
\begin{prop}
\label{Q}
Let $\Sigma$ be a helicoidal surface. A necessary and sufficient condition that $\Sigma$ is a critical point for the functional ${\mathcal E}_{a, \Lambda_0}$ is that (\ref{con}) holds.
\end{prop}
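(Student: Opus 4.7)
The necessity direction is essentially what the excerpt has been building up to: starting from (\ref{EL}), substituting into the first variation formula (\ref{fv}), and applying the admissible Killing variation $\delta X = E_3$ (for which $\delta {\mathcal E}_{a,\Lambda_0} = 0$ automatically, since translation is an isometry preserving every ingredient of the energy), the bulk integral drops out and one is left with the boundary identity (\ref{bint}). The computation displayed on $\partial\Sigma_1 = C_1+\alpha_1-C_2-\alpha_2$ shows the $\alpha_i$-contributions cancel by helicoidal congruence, while the $C_i$-contribution has integrand that is constant along each helix. Equality of the two helix integrals then forces the constant to be the same on every helix, which is exactly (\ref{con}).

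For the sufficiency direction I would simply reverse this argument. Assume (\ref{con}) holds on $\Sigma$, and apply (\ref{fv}) to the same strip $\Sigma_1$ with $\delta X = E_3$. The left side is still zero since $E_3$ is Killing, and the same calculation as in the excerpt identifies the boundary integrand on each $C_i$ as $(-1)^{i+1}\omega\bigl(\hat Q - \tfrac{a}{8}R^4 + \tfrac{\Lambda_0}{2}R^2\bigr)\,dt$. By hypothesis (\ref{con}), this quantity equals the same constant $c$ on every helix, so the $C_1$ and $-C_2$ pieces cancel and the boundary integral in (\ref{fv}) vanishes. Hence
\begin{equation*}
\int_{\Sigma_1}\Bigl(-2H-\frac{a}{2}R^2+\Lambda_0\Bigr)\nu_3\,d\Sigma = 0
\end{equation*}
for every strip $\Sigma_1$ cut out of $\Sigma$ by two orbits of the helicoidal group and two horizontal planes.

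Because both $2H+\tfrac{a}{2}R^2-\Lambda_0$ and $\nu_3$ are invariant under the helicoidal motion, the integrand descends to a function on the profile (quotient) curve. Writing $d\Sigma$ in product form along the Killing direction and the profile arc and varying the strip freely, the arbitrariness of $\Sigma_1$ forces $(2H+\tfrac{a}{2}R^2-\Lambda_0)\,\nu_3\equiv 0$ pointwise on $\Sigma$.

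The main (mild) obstacle is removing the factor $\nu_3$. For any helicoidal surface that is not a vertical circular cylinder, $\nu_3$ is a real-analytic function whose zero set is lower-dimensional, so $\{\nu_3\ne 0\}$ is open and dense and the Euler--Lagrange expression vanishes there; continuity then extends (\ref{EL}) to all of $\Sigma$. The degenerate case where $\nu_3\equiv 0$, i.e.\ $\Sigma$ is a vertical cylinder $R\equiv R_0$, must be handled by a direct verification: both (\ref{con}) and (\ref{EL}) reduce to algebraic conditions on $R_0$, $H$ and $\hat Q$, and a short computation using $\hat Q = Q$ and the expression of $H$ and $Q$ in terms of $R_0$ on a cylinder shows the two conditions are equivalent. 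This completes the equivalence claimed in Proposition~\ref{Q}.
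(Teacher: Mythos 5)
Your necessity argument and the core of your sufficiency argument coincide with the paper's: the variation $\delta X=E_3$ kills the bulk term, the $\alpha_i$ contributions cancel by congruence, the helix contributions are controlled by (\ref{con}), and one is reduced to $\int_{\Sigma_1}(-2H-\tfrac{a}{2}R^2+\Lambda_0)\nu_3\,d\Sigma=0$. Your way of exploiting this (arbitrariness of the strip forces the $s$-dependent integrand to vanish pointwise) is a legitimate variant of the paper's localization-plus-sign contradiction, since everything in the integrand depends on $s$ alone.

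The gaps are in how you remove the factor $\nu_3$. First, real-analyticity of $\nu_3$ is not available: the surface is only assumed smooth, and $\nu_3=\omega\xi_1/\sqrt{1+\omega^2\xi_1^2}$ vanishes exactly where the profile curve is locally an arc of a circle centered on the axis. Condition (\ref{con}) by itself does not rule out a smooth profile curve that is circular on one sub-arc and non-circular on another, so "the zero set of $\nu_3$ is lower-dimensional unless $\Sigma$ is a cylinder" needs an argument; the paper instead observes directly that any arc on which $\nu_3\equiv 0$ is circular and treats those arcs separately, running the sign argument only on arcs where $\nu_3\neq 0$ almost everywhere. Second, and more seriously, your proposed "direct verification" for the cylinder case cannot succeed as described: on a round cylinder of radius $R_0$ every term in (\ref{con}) ($\hat Q=\pm R_0$, $R^2=R_0^2$) is constant, so (\ref{con}) holds vacuously for \emph{every} $R_0$, whereas (\ref{EL}) forces the algebraic relation $\mp 1/R_0=\Lambda_0-aR_0^2/2$. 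The two conditions are therefore not equivalent there, and this degenerate case is a genuine obstruction to the sufficiency claim as literally stated (the paper's own proof disposes of it with the unproved assertion that the orbit of a circular arc satisfying (\ref{con}) is critical, so the proposition should really be read modulo the circular/cylindrical case or with the constant $C$ suitably constrained). You should either exclude circular arcs explicitly or show that, for the surfaces under consideration, the constant in (\ref{con}) pins down the admissible radii.
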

\begin{proof}
The necessity was shown above, we now show that the condition is sufficient. We can assume that the helicoidal symmetry group of the surface fixes the vertical axis.

Any helicoidal surface arises as the orbit of a planar `` generating curve'' $\alpha$ under a helicoidal motion. We let  $s$ be the arc length coordinate of $\alpha$ and we let $t$ denote a coordinate for the helices which are the orbits of points in $\alpha$. Local calculations which can be found in \cite{P1}, show that the mean curvature $H$ and the third component of the normal $\nu_3$ are functions of $s$ alone. Also, it is clear that the function $R^2$ only depends on $s$. 

It is easy to see that if $\nu_3$ vanishes on any arc of $\alpha$, then this arc is necessarily circular. It is clear that the orbit of a circular arc  satisfying $\ref{con}$  is a critical surface for the functional ${\mathcal E}_{a, \Lambda_0}$.  Now consider a connected arc $\eta \subset \alpha$ on which  (say)  $\nu_3>0$ holds  almost everywhere. If (\ref{EL}) does not hold  on $\alpha$, we can assume, by replacing $\alpha$ with a sub-arc if necessary, that $-2H-aR^2/2+\Lambda_0>0$ holds almost everywhere on $\alpha$ also.  Let $\Sigma_1$ denote the compact domain consisting of the orbit of the arc $\alpha$,
for $0\le t \le 2\pi/\omega$.  The boundary of $\Sigma_1$ consists of two helices  $C_1$, $C_2$ together with two arcs $\alpha_1$, $\alpha_2$ both congruent to $\alpha$. 

We take the first variation of ${\mathcal E}_{a,\Lambda_0}$ with the variation field being the constant vector $E_3$. Since $E_3$ is the generator of a one parameter family of isometries,  this first variation vanishes. We express the first variation as in (\ref{fv}).  Since (\ref{con}) holds, the contributions to the boundary integral is zero since it is given by the right hand side of $(\ref{bint})$vanishes  and
the integrals over $\alpha_1$ and $\alpha_2$ cancel each other since these arcs are congruent and are traversed in opposite directions. We then obtain from the calculations given above, that
$$ 0=\int_{\Sigma_1} (-2H-\frac{a}{2}R^2+\Lambda_0)\nu_3\:d\Sigma\:,$$
which is a contradiction since the integrand is positive almost everywhere on $\Sigma_1$.

\end{proof}

This result can easily be modified for axially symmetric surfaces. In that case, the Killing field used is simply $E_3\times X$ and the helices are replaced by circles and the equation (\ref{con}) still holds.  
\section{TreamillSled coordinates analysis}

We will be considering immersions of the form

$$\phi(s,t)=(x(s) \cos(\omega t)+y(s)\sin(\omega t),-x(s)\sin(\omega t)+y(s)\cos(\omega t),t) $$

with the curve $\alpha(s)=(x(s),y(s))$  parametrized by arc-length. We will refer to the curve $\alpha$ as the profile curve of the surface since the surface given as the image of $\phi$ is the orbit of $\alpha$ under the helicoidal motion (\ref{HM}).  For $\theta(s)$ defined by
$$x^\prime(s)=\cos(\theta(s))\com{and} y^\prime(s)=\sin(\theta(s))$$
We define the {\it TreadmillSled coordinates}  $\xi_1(s)$ and $\xi_2(s)$ by
\begin{eqnarray}\label{TS coordinates}
 \xi_1(s)=x(s)\cos(\theta(s))+y(s)\sin(\theta(s)) \com{and}  \xi_2(s)=x(s)\sin(\theta(s))-y(s)\cos(\theta(s))\:.
\end{eqnarray}

The Gauss map of the immersion $\phi$ can be computed as
$$\nu=\frac{1}{\sqrt{1+w^2 \xi_1^2}}\left(\sin(\theta-\omega t),-\cos(\theta-\omega t),-\omega\xi_1\right)\:,$$
and so by a direct calculation, we obtain ${\hat Q}=\xi_2/\sqrt{1+\omega^2\xi_1^2}$. Finally, using that $R^2=x^2+y^2=\xi_1^2+\xi_2^2$, we see from (\ref{con}), that the immersion $\phi$ represents 
a rotating helicoidal drop if and only if there holds
\begin{equation}
G(\xi_1, \xi_2):= \frac{ 2 \xi_2}{\sqrt{1+\omega^2 \xi_1^2}} + \Lambda_0 (\xi_1^2+\xi_2^2) -\frac{a}{4} (\xi_1^2+\xi_2^2)^2\equiv {\rm constant}=:C\:.
 \end{equation}


After direct computation shows that  the equation $2 H= \Lambda_0-a \frac{R^2}{2}$ reduces to 

\begin{eqnarray}\label{mdtheta}
\theta^\prime(s)=\frac{ 2 w^2\xi_2  - 2 \Lambda_0 \left(1+w^2 \xi_1^2 \right)^{3/2} +a(\xi_1^2+\xi_2^2)\left(1+w^2 \xi_1^2\right)^{3/2}}{1+w^2
\left(\xi_1^2+ \xi_2^2 \right)} 
\end{eqnarray}

From the definition of $\xi_1$,  $\xi_2$ and $\theta$  we get that $\xi_1^\prime=1- \xi_2\theta^\prime$ and $\xi_2^\prime=\xi_1\theta^\prime$. Using Equation (\ref{mdtheta}) we conclude that $\xi_1$ and $\xi_2$ must satisfy

\begin{eqnarray} \label{the ode}
\xi_1^\prime &=&f_1(\xi_1,\xi_2) =\frac{  \left( 1+w^2 \xi_1^2 \right) \left(     2 +  \sqrt{ 1+w^2 \xi_1^2}\, \xi_2 \left( 2  \Lambda_0 -a(\xi_1^2+\xi_2^2)    \right)    \right)}{2(1+w^2 (\xi_1^2+ \xi_2^2 ))}   \\
\xi_2^\prime &=&f_2(\xi_1,\xi_2) = \frac{ \xi_1\,   \left(   2 w^2\xi_2 -  2  \Lambda_0  \left( 1+w^2 \xi_1^2 \right)^\frac{3}{2}  + a (\xi_1^2+\xi_2^2)
\left( 1+w^2 \xi_1^2 \right)^\frac{3}{2}        \right)}{2(1+w^2
(\xi_1^2+ \xi_2^2 ))}    
\end{eqnarray}

This system of ordinary differential equations for $\xi_1$ and $\xi_2$ provides a different proof of the fact that the $G(\xi_1(s),\xi_2(s))$ must be constant. Since we can check that

$$\frac{\partial G}{\partial \xi_1}= -\frac{2+2 w^2
(\xi_1^2+ \xi_2^2 )}{\left( 1+w^2 \xi_1^2 \right)^\frac{3}{2}}  \, f_2 \com{and} \frac{\partial G}{\partial \xi_2}= \frac{2+2 w^2
(\xi_1^2+ \xi_2^2 )}{\left( 1+w^2 \xi_1^2 \right)^\frac{3}{2}}  \,  f_1 $$


\begin{rem}\label{xi1positive} The level sets of $G$ are symmetric with respect to the $\xi_2$-axis, therefore in order to understand the level set of $G$, it is enough to understand those points in the level set with $\xi_1\ge 0$.

\end{rem}

In order to study the level sets of the function $G$ we change the variables $\xi_1$ and $\xi_2$ for the  variables $r$ and $\xi_2$ where

$$ r=\xi_1^2+\xi_2^2\:. $$

Doing this change, we get that the equation $G=C$ reduces to

$$ \frac{ 2 \xi_2}{\sqrt{1+\omega^2 r-\omega^2\xi_2^2}}   + \Lambda_0 r -\frac{a}{4} \, r^2= C \:.$$
In fact, this equation is exactly the one appearing in (\ref{con}).

Therefore,

$$ \xi_2=\frac{(4 C + r (-4 \Lambda_0 + a r))\sqrt{1+r \omega^2}}{\sqrt{64+(4C+r(-4\Lambda_0+ar))^2\, \omega^2}  } \:.$$

 By Remark \ref{xi1positive}, it is enough to consider those points with $\xi_1\ge0$. Since $\xi_1=\sqrt{r-\xi_2^2}$ we get that

 $$\xi_1=\frac{\sqrt{p(r,a,\Lambda,C)}}{\sqrt{64+(4C+r(-4\Lambda_0+ar))^2\, \omega^2}  } ,$$
 
where

\begin{eqnarray}
p(r,a,\Lambda,C)=-16 C^2 + 64 r + 32 C \Lambda_0 r - 16 \Lambda_0^2  r^2 - 8 a C r^2 + 8 a \Lambda_0 r^3 - a^2 r^4\, \:.
\end{eqnarray}

\begin{rem}
Since $p(r,a,\Lambda,C)$ is a polynomial in $r$ of degree 4 with negative leading coefficient when $a\ne0$ and $p$ is a polynomial of degree two when $a=0$, we get that the values of $r$ for which $p(r,a,\Lambda,C)$ is positive are bounded. Since $r=\xi_1^2+\xi_2^2=x^2+y^2$,  we conclude that the profile curve of any helicoidal rotating drop is bounded.
\end{rem}

\begin{mydef}\label{def of rho} Let $r_1$ and $r_2$ be two non negative values that satisfies $p(r_1)=p(r_2)=0$ and $p(r)>0$ for all $r\in (r_1,r_2)$. We define  $\rho:[r_1,r_2]\longrightarrow \bfR{2}$ by

$$ \rho(r)= \big(\,   \frac{\sqrt{p(r,a,\Lambda,C)}}{\sqrt{64+(4C+r(-4\Lambda_0+ar))^2\, \omega^2}  }  \, ,\,\frac{(4 C + r (-4 \Lambda_0 + a r))\sqrt{1+r \omega^2}}{\sqrt{64+(4C+r(-4\Lambda_0+ar))^2\, \omega^2}  }  \, \big) \, \:.$$

\end{mydef}

\begin{rem} \label{prop of rho} As pointed out before, all the level sets of the function $G$ are bounded. We have that  the map $\rho$ parametrizes half of the level set $G=C$.
\end{rem}

\begin{mydef}\label{fundamental piece}  In the case the level set $G=C$ is a regular closed curve or union of regular closed curves, we define a fundamental piece of the profile curve as a simple connected part of the profile curve such that the parametrized curve $(\xi_1,\xi_2)$ given by equations (\ref{TS coordinates}) correspond to exactly one closed curve in the  level set of $G=C$
\end{mydef}

\begin{rem}
From the definition of TreadmillSled given in \cite{P1}, we obtain that the profile curve of the solutions of the helicoidal rotational drop equation are characterized by the property that their treadmillSled are the level sets of $G$. In other words, using the notation of \cite{P1}, we have that $TS(\alpha)=\beta$ where $\beta$ is a parametrization of a connected component of the level set of $G=C$ and $\alpha$ is the profile curve of the helicoidal rotating drop. We will see that, for a few exceptional examples, the profile curve is a bounded complete curve having a circle as a limit cycle. For the non exceptional examples we can define an initial and final point of the fundamental piece and  we have that the whole profile curve is the union of rotated fundamental pieces. We also have that if  $R_1=\hbox{min}\{ |m| : m\in TS(\alpha)\}$ and $R_2=\hbox{max}\{ |m| : m\in TS(\alpha)\}$ and $\Delta{\tilde \theta}$ is the variation of the angle between $\overrightarrow{0p_1}$ and $\overrightarrow{0p_2}$ where $p_1$ and $p_2$ are the initial and final points of a fundamental piece, then, the profile curve is properly immersed if $\frac{\Delta\tilde{\theta}}{\pi}$ is a rational number, otherwise the profile curve is dense in the set
 $\{(x,y)\in \bfR{2}: R_1\le |(x,y)|\le R_2\}$.
 \label{def of delta theta}

\end{rem}

\begin{figure}[ht]
\centerline{\includegraphics[width=4cm,height=4cm]{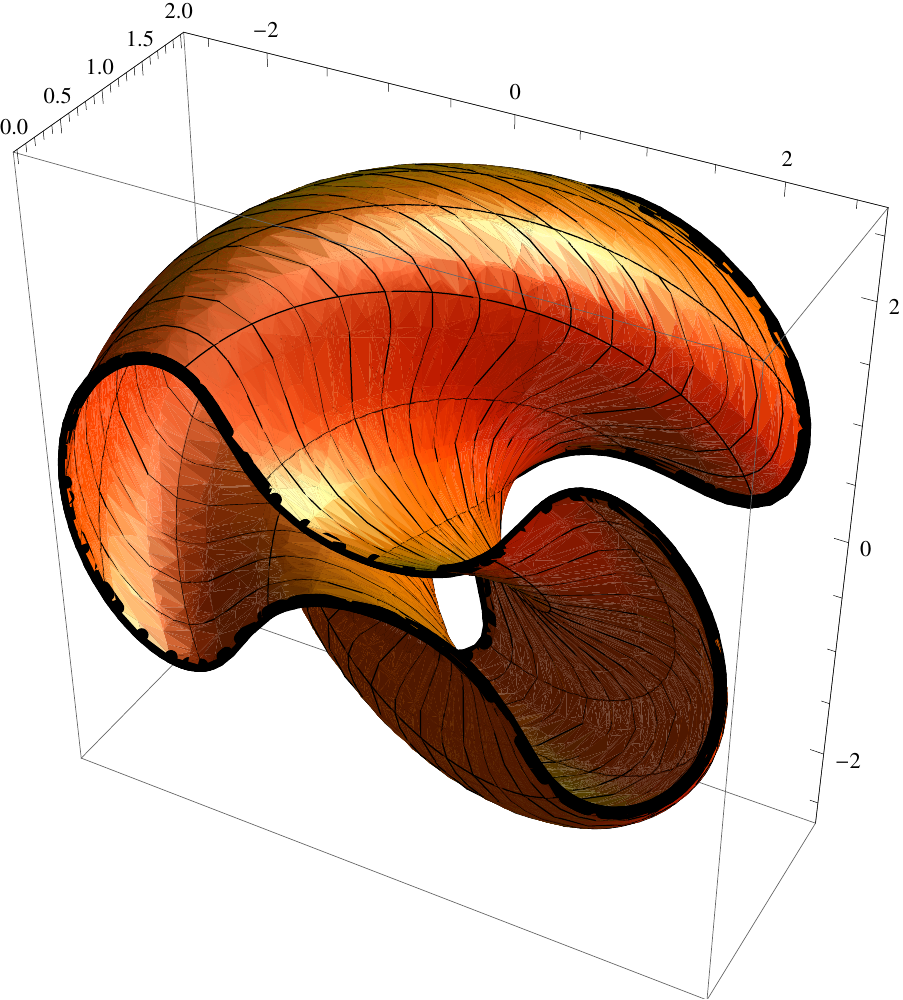}\hskip.1cm \includegraphics[width=4cm,height=3cm]{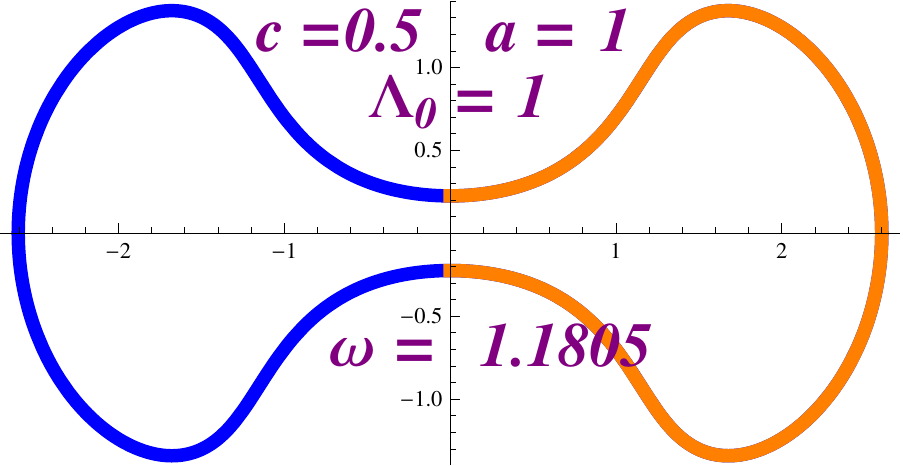}\hskip.1cm\includegraphics[width=3.5cm,height=3.5cm]{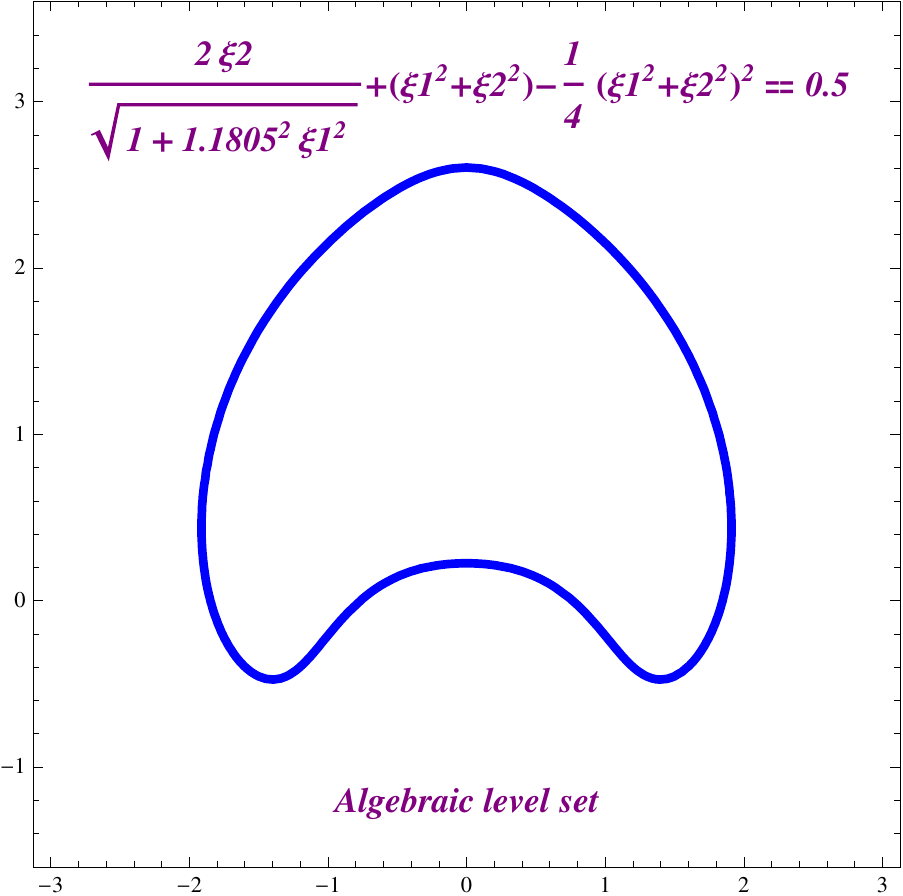}\hskip.1cm\includegraphics[width=3.5cm,height=3.5cm]{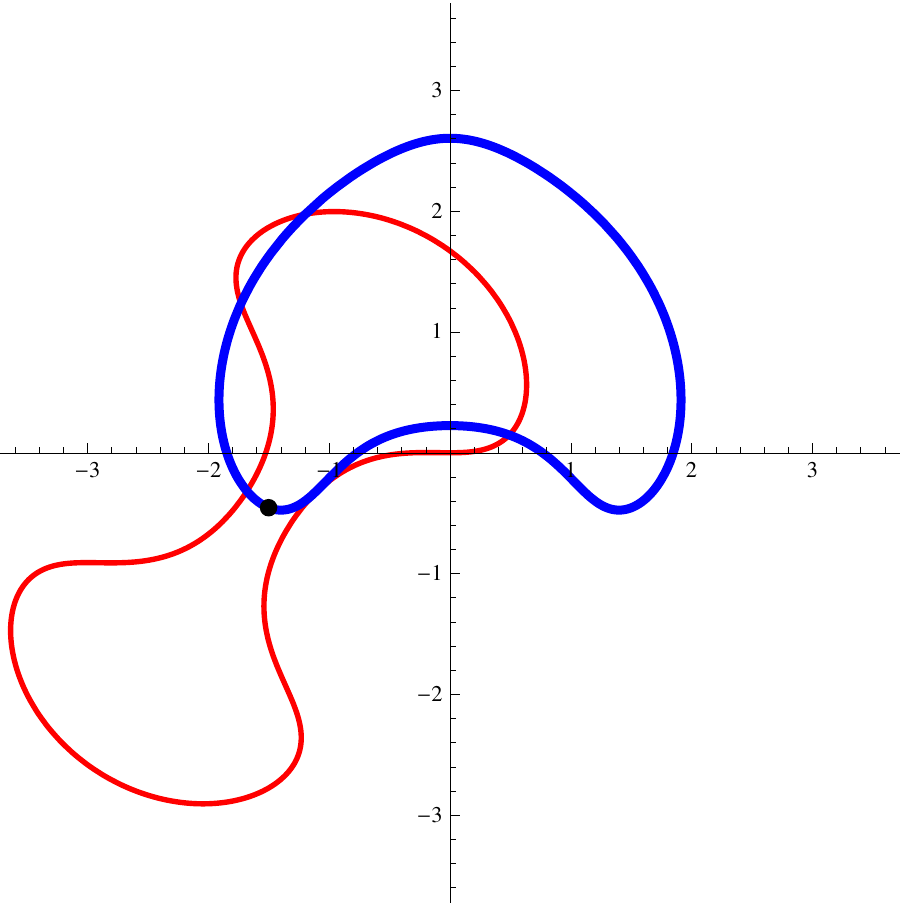}}
\caption{ The first picture shows a helicoidal rotational drop, the second picture shows its profile curve emphasizing the fundamental piece, the third picture show the level set $G=C$ and the last picture shows how the TreadmillSled of the profile curve produces the level set $G=C$, in this particular example the TreadmillSled of the profile curve will go over the level set $G=C$ two times.}
\label{TSofcyl}
\end{figure}

We compute the variation $\Delta{\tilde \theta}$ in terms of the parameter $r$. We assume that $\alpha(s)$ is the profile curve of a helicoidal rotational drop. Recall that we are assuming that that $s$ is the arc-length parameter for the curve $\alpha$. If $\beta(s)=(\xi_1(s),\xi_2(s))$, then we have that for some function $s=\sigma(r)$, $\beta(\sigma(r))=\rho(r)$ holds. By the chain rule we have that

\begin{eqnarray}\label{ds}
\frac{ds}{dr}=\frac{d\sigma}{dr}=\frac{|\rho^\prime(s)|}{|\beta^\prime(s)|}=\sqrt{\frac{|\rho^\prime(r)|^2}{f_1^2(s)+f_2^2(s)}}
=\frac{1}{2}\, \sqrt{\frac{64+(4C+r(-4\Lambda_0+ar))^2\, \omega^2}{p(r,a,C)}}\:.
\end{eqnarray}
and we also have that if $\tilde{\theta}$ denotes the polar angle of the profile curve, this is, if $\tilde{\theta}(s)$ satisfies the equation $ \alpha(s)=(x(s),y(s))=R(s) (\cos\tilde{\theta(s)},\sin\tilde{\theta(s)}) $, 
  then $\tilde{\theta}^\prime(s)= \frac{\xi_2(s)}{r}$


\begin{eqnarray}\label{dtheta}
\frac{d\tilde{\theta}}{dr} = \frac{d\tilde{\theta}}{ds}\, \frac{ds}{dr}= \frac{1}{2} \,  \frac{(4C+ a r^2-4 \Lambda_0 r) \sqrt{1+r\omega^2}}{r \sqrt{p(r,a,C)}}
\end{eqnarray}

Since the map $\rho:[r_1,r_2]\longrightarrow \bfR{{2}}$ parametrizes half of the TreadmillSled of the fundamental piece of the profile curve, we obtain the following expression for function $\Delta \tilde{ \theta}$ defined in Remark \ref{def of delta theta}

\begin{eqnarray}\label{change}
\Delta \tilde{\theta} = \Delta \tilde{\theta}(C,a,\omega,r_1,r_2)  =\, \int_{r_1}^{r_2}  \frac{(4C+ a r^2-4 \Lambda_0 r) \sqrt{1+r\omega^2}}{r \sqrt{p(r,a,C)}}\, dr
\end{eqnarray}

\begin{rem}
If we have a helicoidal rotational drop $\Sigma$ and we multiply every point by a positive fixed number $\lambda$, this is, if we consider the surface $\lambda \Sigma$, then this new surface satisfies the equation of the rotating drop for some other values of $\Lambda_0$ and $a$. We also have that if we change the orientation of the profile curve of a surface $\Sigma$ that satisfies the   equation of the rotating drop with values $\Lambda_0$, $a$ and $H$, then the reparametrized surface satisfies the equation with values  $-\Lambda_0$, $-a$ and $-H$.  With these two observations in mind, we have that in order to consider all the helicoidal rotational drops, up to parametrizations, rigid motions and dilations, it is enough to consider two cases: Case I, $\Lambda_0=0$ and $a=-1$ and Case II, $\Lambda_0=1$ and $a$ is any real number.

\end{rem}

\subsection{Case I: $\Lambda_0=0$ and $a=-1$}

In this case the polynomial $p(r,a,\Lambda,C)$ reduces to

$$q=q(r,C)=-16 C^2 + 64 r + 8 C r^2 - r^4 $$

Recall that we are interested in finding two positive consecutive roots of the polynomial $q$. Notice that when $C$ is a negative large number then the polynomial $q$ has no roots and when $C$ is a positive large number then the polynomial $q$ has more than one root. In every case $q(0)=-16 C^2\le0$ and the limit when $r\to \infty$  of $q(r)$ is negative infinity. The following lemma was proven in \cite{PP} and provides the number of possible roots of  $q(r,C)$ in terms of the values of $C$.

\begin{lem} \label{roots Lamda0=0}
For any $C>C_0=-\frac{3}{2^{\frac{2}{3}}}$, the polynomial $q(r,C)$ has exactly two positive real roots. 
When $C=C_0$, $\sqrt[3]{4}$ is the only real root of $q(r,C)$  and when $C<C_0$, $q(r,C)$ has not real roots.  
\end{lem}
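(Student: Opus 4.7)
The plan is to reduce the two-variable problem $q(r,C)=0$ to a one-variable one by viewing $C$ as a multivalued function of $r$, and then counting intersections of horizontal lines with the resulting graph.

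First, I observe that $q(r,C)$ is quadratic in $C$,
\[
16 C^{2} - 8 r^{2} C + (r^{4} - 64 r) = 0,
\]
with discriminant $4096\, r$, so for any $r \ge 0$ the equation $q(r,C)=0$ has exactly the two real branches
\[
C_{\pm}(r) = \frac{r^{2} \pm 8\sqrt{r}}{4}.
\]
The positive real roots of $q(\cdot, C)$ correspond bijectively to intersections of the horizontal line at height $C$ with the graphs of $C_{\pm}$ restricted to $r > 0$.

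Next, I study the shapes of these branches. $C_{+}$ satisfies $C_{+}(0)=0$ and $C_{+}'(r) = r/2 + 1/\sqrt{r} > 0$, so it is a strictly increasing bijection $[0,\infty) \to [0,\infty)$. For $C_{-}$, direct differentiation gives $C_{-}'(r) = (r^{3/2} - 2)/(2\sqrt{r})$, which vanishes only at $r^{\ast} := 2^{2/3} = \sqrt[3]{4}$, is negative on $(0, r^{\ast})$, and positive on $(r^{\ast}, \infty)$. Thus $C_{-}$ decreases from $C_{-}(0)=0$ to the minimum value
\[
C_{-}(r^{\ast}) = \frac{2^{4/3} - 8 \cdot 2^{1/3}}{4} = -\frac{3 \cdot 2^{1/3}}{2} = -\frac{3}{2^{2/3}} = C_{0},
\]
then increases back through $C_{-}(4)=0$ to $+\infty$.

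Counting horizontal-line intersections now finishes the proof. If $C < C_{0}$, the line lies strictly below both graphs on $[0,\infty)$, so $q(\cdot, C)$ has no non-negative roots; to rule out negative roots as well I would note that $\partial_{r} q = -4r^{3} + 16 C r + 64$ is, for $C < 3$, a cubic of negative discriminant $256(C^{3}-27)$, hence with a unique real root, which is positive since $\partial_{r} q(0) = 64 > 0$. Therefore $q(\cdot, C)$ is unimodal on $\mathbb{R}$ with its maximum attained on $(0,\infty)$, and absence of positive roots forces absence of real roots. If $C = C_{0}$, the line is tangent to $C_{-}$ at $r^{\ast}$, giving the single double root $r = \sqrt[3]{4}$. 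If $C > C_{0}$ the line meets the two graphs in exactly two points of $[0,\infty)$: for $C_{0} < C \le 0$ both come from $C_{-}$ (the decreasing and increasing pieces), while for $C > 0$ one is a crossing of $C_{+}$ and one is a crossing of $C_{-}$ in the region $r > 4$. The only delicate point is the transitional value $C = 0$, where the smaller root degenerates to $r=0$; here ``two positive roots'' must be read as ``two non-negative roots'' in accordance with Definition~\ref{def of rho}.
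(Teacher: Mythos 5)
Your argument is correct and complete, but note that this paper does not actually prove Lemma \ref{roots Lamda0=0}: it is quoted from the companion preprint \cite{PP}, so there is no in-text proof to compare against. Your route --- viewing $q(r,C)=0$ as a quadratic in $C$ with discriminant $4096\,r$, solving explicitly for the two branches $C_\pm(r)=(r^2\pm 8\sqrt r)/4$, and counting intersections of horizontal lines with these two explicit curves --- is a clean, self-contained replacement; the elementary calculus on $C_-$ correctly locates the minimum at $r^\ast=\sqrt[3]{4}$ with value $C_0=-3/2^{2/3}$, which matches the constant in the lemma. Two small comments. First, your detour through the cubic $\partial_r q$ to exclude negative roots is more work than needed: for $r<0$ the discriminant $4096\,r$ is negative and the quadratic $16C^2-8r^2C+(r^4-64r)$ has positive leading coefficient, so $q(r,C)<0$ for all real $C$ whenever $r<0$, which disposes of negative roots in one line. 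Second, your observation about $C=0$ is a genuine (if harmless) catch: $q(r,0)=r(64-r^3)$ has only one strictly positive root, so the lemma as literally stated fails there; since Definition \ref{def of rho} only requires two non-negative consecutive zeros of $p$ with $p>0$ between them, reading ``positive'' as ``non-negative'' is indeed the intended interpretation, and it would be worth flagging this when citing \cite{PP}.
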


Now we will compute the limit of $\Delta \tilde{\theta}$ when $C$ goes to $C_0$.
We will use the following lemma from \cite{P2}

\begin{lem}\label{lemma 1} Let $f(c,r)$ and $g(r,c)$ be smooth functions such that $g(C_0,r_0)= \frac{\partial g}{\partial r} (C_0,r_0)=0 $ and
$\frac{\partial^2 g}{\partial r^2} (C_0,r_0)= -2 A$ where $A>0$.  If $\{C_n\}$, $\{u_n\}$ and $\{v_n\}$ are sequences such that   $C_n$ converges to $C_0$, $u_n$ and $v_n$ converges to $r_0$ with $u_n<r_0<v_n$, and $g(u_n) =g(v_n)=0$  and  $g(r)>0$ for all $r\in(u_n,v_n)$, then

$$\int_{u_n}^{v_n}\frac{f(c,r)\, dr}{\sqrt{g(c,r)}} \longrightarrow  \, f(C_0,r_0)\, \frac{\pi}{\sqrt{A}} \com{as}  n\longrightarrow \infty\:. $$

\end{lem}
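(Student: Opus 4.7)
The plan is to reduce the integral to a standard form via a trigonometric substitution, exploiting a Hadamard-type factorization of $g$ adapted to its two simple roots $u_n,v_n$, which coalesce at the degenerate double root $r_0$ of $g(C_0,\cdot)$.

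First I will factor $g$. Since $g(C_n,u_n)=0$, Hadamard's lemma in one variable produces a smooth $g_1$ with $g(C_n,r)=(r-u_n)\,g_1(C_n,r)$; evaluating at $r=v_n$ (and using $v_n\neq u_n$) forces $g_1(C_n,v_n)=0$, so a second application yields
\[
g(C_n,r)=(r-u_n)(v_n-r)\,h_n(r),
\]
where $h_n$ is smooth and positive on $(u_n,v_n)$, because $g$ is positive there and $(r-u_n)(v_n-r)$ is too. Differentiating twice in $r$ and evaluating at $r_0$ gives
\[
g_{rr}(C_n,r_0)=-2h_n(r_0)+2\bigl(u_n+v_n-2r_0\bigr)h_n'(r_0)+(r_0-u_n)(v_n-r_0)\,h_n''(r_0).
\]
Since $u_n,v_n\to r_0$ while $g_{rr}(C_n,r_0)\to g_{rr}(C_0,r_0)=-2A$, the last two terms vanish in the limit and we obtain $h_n(r_0)\to A$. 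Applying the same expansion at a general $r\in[u_n,v_n]$, where $|r-r_0|\le v_n-u_n\to 0$, upgrades this to the uniform convergence $h_n(r)\to A$ on $[u_n,v_n]$.

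Next, I introduce the substitution
\[
r=\frac{u_n+v_n}{2}+\frac{v_n-u_n}{2}\sin\phi,\qquad \phi\in[-\pi/2,\pi/2],
\]
so that $(r-u_n)(v_n-r)=\bigl(\tfrac{v_n-u_n}{2}\bigr)^{2}\cos^{2}\phi$ and $dr=\tfrac{v_n-u_n}{2}\cos\phi\,d\phi$. Substituting into the integrand, the prefactor $(v_n-u_n)/2$ and $\cos\phi$ cancel exactly against the corresponding factors in $\sqrt{g(C_n,r)}$, yielding the clean identity
\[
\int_{u_n}^{v_n}\frac{f(C_n,r)\,dr}{\sqrt{g(C_n,r)}}=\int_{-\pi/2}^{\pi/2}\frac{f(C_n,r(\phi))}{\sqrt{h_n(r(\phi))}}\,d\phi.
\]

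Finally, I pass to the limit on the right-hand side. Since $r(\phi)\to r_0$ uniformly in $\phi$ and $C_n\to C_0$, continuity of $f$ gives $f(C_n,r(\phi))\to f(C_0,r_0)$ uniformly, and the previous step yields $\sqrt{h_n(r(\phi))}\to\sqrt{A}$ uniformly; integrating on $[-\pi/2,\pi/2]$ produces the claimed value $\pi f(C_0,r_0)/\sqrt{A}$. The main obstacle is the uniform convergence $h_n\to A$: the Hadamard factorization itself is routine, but isolating the leading coefficient requires the hypothesis $g_{rr}(C_0,r_0)=-2A$ together with careful bookkeeping in the product-rule expansion, where the error terms vanish precisely because both $r_0-u_n$ and $v_n-r_0$ tend to zero.
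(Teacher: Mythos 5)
The paper does not actually prove this lemma; it is imported verbatim from \cite{P2}, so there is no in-paper argument to compare yours against. Judged on its own terms, your strategy --- factor out the two simple roots, normalize with the substitution $r=\frac{u_n+v_n}{2}+\frac{v_n-u_n}{2}\sin\phi$ so that the integral becomes $\int_{-\pi/2}^{\pi/2} f(C_n,r(\phi))\,h_n(r(\phi))^{-1/2}\,d\phi$, and then pass to the limit --- is the standard and correct route, and the trigonometric reduction and the final limiting step are fine.

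The one step you have not actually justified is the assertion that the terms $2(u_n+v_n-2r_0)h_n'(r_0)$ and $(r_0-u_n)(v_n-r_0)h_n''(r_0)$ ``vanish in the limit.'' The coefficients tend to zero, but that only helps if $h_n'(r_0)$ and $h_n''(r_0)$ stay bounded as $n\to\infty$, and nothing in your write-up establishes this: $h_n$ is a different function for each $n$, obtained by dividing by factors that degenerate as $u_n,v_n\to r_0$. The gap is real but closable. The cleanest fix is to note that the two Hadamard steps give $h_n(r)=H(C_n,u_n,v_n,r)$ for a single function $H$ that is smooth \emph{jointly} in all four arguments (each factor is an explicit integral of a derivative of $g$ along a segment); equivalently, $h_n(r)=-g[u_n,v_n,r]=-\int_{\Delta_2}g_{rr}\bigl(C_n,\,u_n+t_1(v_n-u_n)+t_2(r-u_n)\bigr)\,dt_1\,dt_2$ by the Hermite--Genocchi formula. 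Since all evaluation points converge uniformly to $(C_0,r_0)$ for $r\in[u_n,v_n]$, this representation gives at once the uniform bounds on $h_n',h_n''$ and, more directly, the uniform limit $h_n\to -\tfrac12 g_{rr}(C_0,r_0)=A$, making your product-rule bookkeeping unnecessary. With that supplement the proof is complete.
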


Notice that helicoidal rotating drops are defined when $C$ takes value from  $C_0=-\frac{3}{\sqrt[3]{4}}$ to $\infty$. When $C=C_0$ the only root of the polynomial $p$ is $r_0=\sqrt[3]{4}$. If we apply Lemma \ref{lemma 1} with $f(r,c)=\frac{(4 C-r^2) \sqrt{1+r\omega^2}}{r }$ and $g(r,c)=q(r,C)$ to the integral given in (\ref{change}), we obtained that

\begin{eqnarray}\label{lowerbound}
\lim_{C\to C_0^+} \Delta\tilde{\theta}= B(\omega) =- \frac{2 \pi}{\sqrt{3}} \sqrt{1+\sqrt[3]{4}\, \omega^2}
\end{eqnarray}

\begin{rem} \label{embedded}

Recall that whenever $\Delta \tilde{\theta}=\frac{n 2 \pi}{m}$ holds for some pair of  integers $m$ and $n$, then the entire  profile curve is properly immersed and it is invariant under the group $Z_m$. 

\end{rem}

\begin{rem}
Up to dilations and rigid motions, the moduli space for all helicoidal rotating drops with $\Lambda_0=0$ is the region in the plane

$$\{(C,\omega): C\ge C_0=-\frac{3}{\sqrt[3]{4}},\quad \omega>0 \}$$

Moreover,  for any $\omega>0$,  the surface associated with the point $(C,\omega)=(C_0,\omega)$ is a round cylinder of radius $\sqrt[3]{2}$, because it can be easily checked that when $C=C_0$, then, for any $\omega$, the level set $G=C$ reduces to the point $\{(0,-\sqrt{2})\}$

\end{rem}

\begin{figure}[h]\label{special examples}
\centerline{\includegraphics[width=8cm,height=6cm]{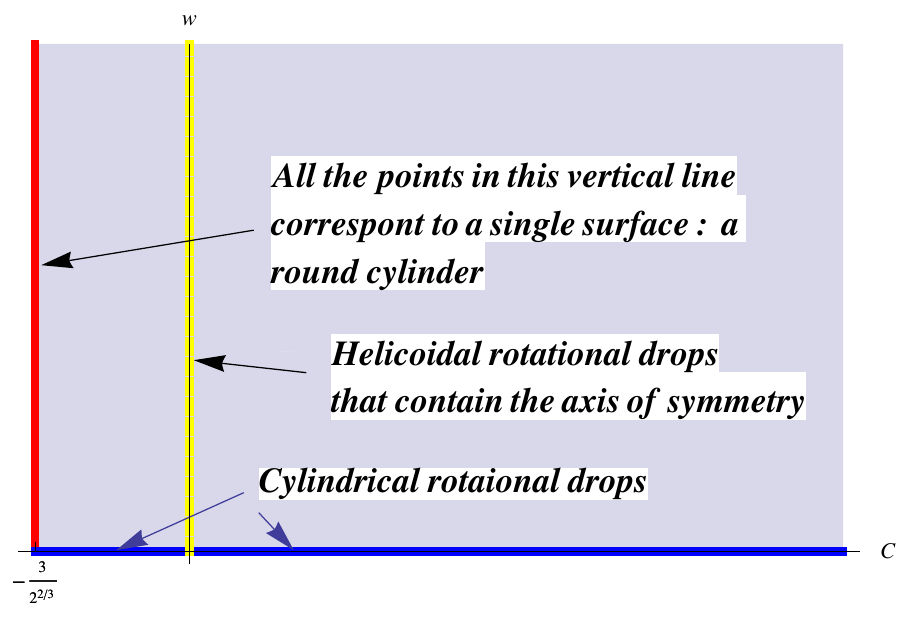}}
\caption{Moduli space of the helicoidal drops with $a=-1$ and $\Lambda=0$}
\end{figure}

\subsection{Case II: $\Lambda_0=1$} First  note that the case $a=0$ corresponds to helicoidal surface with constant mean curvature. These surfaces were studied using similar techniques in \cite{P1} and for this reason we will assume here that $a\ne0$. In this case the polynomial $p(r,a,\Lambda,C)$ reduces to 

\begin{eqnarray}\label{polyq}
q=q(r,C,a)=-16 C^2 + 64 r+32 C\,  r-16 \, r^2 - 8 a C\,  r^2+8 a \, r^3 -a^2\,  r^4\:.
\end{eqnarray}

Recall that we are interested in finding two positive consecutive roots of the polynomial $q$. The roots of the polynomial $q$ given in (\ref{polyq}) were analyzed in \cite{PP}. In order to describe the roots of $q$ we need to define the following functions.

\begin{mydef}\label{def of ri}
Let $h(R)=\frac{2(R-1)}{R^3}$,  and  define $R_1:(-\infty,0)\cup (0,\infty)\to {\bf R}$, $R_2:(0,8/27)\to {\bf R}$ and $R_3:(0,8/27)\to {\bf R}$ by

$$\begin{array}{cccc}
R_1(a)=R  \com{such that}&  h(R)=a & \com{with} &R<1\:,\\
R_2(a)=R \com{such that}& h(R)=a  &\com{with}  &1<R<3/2\:,\\
R_3(a)=R \com{such that} & h(R)=a  &\com{with} & 3/2<R<\infty \:.
\end{array}$$

We also define the functions $r_1:(-\infty,0)\cup (0,\infty)\to {\bf R}$, $r_2:(0,8/27)\to {\bf R}$ and $r_3:(0,8/27)\to {\bf R}$ by

$$ r_1(a)=R_1^2(a),\quad  r_2(a)=R_2^2(a),\quad  r_3(a)=R_3^2(a)\:. $$

\end{mydef}

 The following lemma was proven in \cite{PP} and provides the number of roots of $q$ depending on the values $a$ and $C$.

\begin{lem} \label{roots}
Let $r_1$, $r_2$ and $r_3$ be as in Definition \ref{def of ri} and for $i=1, 2, 3$  define
$$ C_i=\frac{16-8r_i+6a{r_i}^2-a^2{r_i}^3}{4(-2+ar_i)} $$

and

$$q=q(r, a,C)=-16 C^2 + 64 r+32 C\,  r-16 \, r^2 - 8 a C\,  r^2+8 a \, r^3 -a^2\,  r^4 \:.$$

Recall that the domain of $C_i$ is the same domain of $r_i$. That is, the domain of $C_1(a)$ is   $\{a\ne0\}$ and the domain of $C_2(a)$ and $C_3(a)$ is the interval  $(0,\frac{8}{27}]$.   For any $a\ne0$ and any $C$, the polynomial $q$ has non negative real roots whose multiplicities are given in the following table.

\begin{tabular}{| l | l | c |}
\hline
{\rm range of }$a$&{\rm range of} $C$&{\rm number of distinct real roots of }$q$\\ \hline

$a<0\Rightarrow C_1(a)<0$&$C<C_1(a)$&$0$\\
&$C=C_1(a)$&$1$\\
&$C>C_1(a)$&2\\
\hline \hline
$a\in (0,\frac{8}{27}) \Rightarrow C_2(a)<0$,  $C_1(a)>0$,\\[2mm] $C_2(a)<C_3(a)<C_1(a)$&$C>C_1(a)$&$0$\\
&$C=C_1(a)$&$1$\\
&$C_3(a)<C<C_1(a)$&$2$\\
&$C=C_3(a)$&$3$(****)\\
&$C_2(a)<C<C_3(a)$&$4$\\
&$C=C_2(a)$&$3$(***)\\
&$C<C_2(a)$&$3$\\
\hline \hline
$a=\frac{8}{27}\Rightarrow C_2(a)=C_3(a)=-9/8$, \\[2mm] $C_1(\frac{8}{27})=9$&$C<-9/8$&$2$\\
&$C=-9/8$&$2$(*)\\
&$-9/8<C<9$&$2$\\
&$C=9$&$1$(**)\\
&$9>C$&$0$\\
\hline \hline
$a> 8/27 \Rightarrow C_1(a)>0$& $C>C_1(a)$&$0$\\
&$C=C_1(a)$&$1$\\
&$C<C_1(a)$&$2$\\
\hline 
\end{tabular}

(*) In this case the roots are $9/4$ with multiplicity $3$ and $81/4$ with multiplicity one.\\
(**) In this case the only real root is $9$ with multiplicity $2$. \\
(***) In this case the first root has multiplicity 2.\\
(****) In this case the second root has multiplicity 2.
\end{lem}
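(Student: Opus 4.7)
The plan is to exploit the fact that $q(r,a,C)$, though quartic in $r$, is only quadratic in $C$. Rewriting
\[q = -16C^2 + 8r(4-ar)\,C + r(64-16r+8ar^2-a^2r^3),\]
the quadratic formula in $C$ has discriminant that collapses dramatically to $4096\,r$, so with $R:=\sqrt r \ge 0$ the two solutions are
\[C_\pm(R) := R^2 - \tfrac{a}{4}R^4 \pm 2R,\]
giving the factorization $q(R^2,a,C) = -16\,(C_-(R)-C)(C_+(R)-C)$. Counting non-negative roots of $q(\cdot,a,C)$ thereby reduces to counting pre-images of the horizontal line $y=C$ on the two explicit quartic graphs $C_\pm$ restricted to $[0,\infty)$, with multiplicities and with the convention that $R=0$ (which lies on both graphs exactly when $C=0$) contributes only the single simple root $r=0$.

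Next I would analyze the critical structure of the two graphs. Since $C_-'(R) = 2R-aR^3-2$, the critical points of $C_-$ solve $h(R)=a$; the symmetry $C_+(R)=C_-(-R)$ then shows the positive critical points of $C_+$ to be the reflections of the negative critical points of $C_-$. A brief monotonicity analysis of $h$ (attaining its maximum $8/27$ at $R=3/2$ on $R>0$, and positive and monotone on $R<0$) produces the qualitative picture of $C_\pm$ on $[0,\infty)$ in each of the four ranges of $a$: both graphs start at $0$ with slopes $\mp 2$ respectively and tend to a common limit ($-\infty$ if $a>0$, $+\infty$ if $a<0$) as $R\to\infty$. The critical values satisfy
\[C_i = \tfrac{R_i^2 - 3R_i}{2} = C_-(R_i),\]
obtained by substituting the critical-point relation $aR_i^3 = 2R_i - 2$ into $C_-(R_i)$; this in particular simplifies the expression for $C_i$ stated in the lemma. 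When $a>0$, $C_1=C_+(|R_1|)$ is also the maximum of $C_+$ on $R>0$ by the reflection.

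With the critical data in hand, one sketches both graphs in each regime and reads off the pre-image counts slice by slice in $C$. The only non-obvious ordering is $C_2<C_3<C_1$ in the regime $a\in(0,8/27)$. The inequality $C_2<C_3$ is immediate because $R_2<R_3$ are adjacent local min and max of $C_-$. For $C_3<C_1$, note that $R_1,R_2,R_3$ are the three real roots of the cubic $aR^3-2R+2=0$, whose coefficient of $R^2$ vanishes; Vieta then gives $R_1+R_2+R_3=0$, hence $R_1+R_3=-R_2<-1<3$, and the algebraic identity $C_1-C_3=(R_3-R_1)(3-R_1-R_3)/2$ (from $C_i=(R_i^2-3R_i)/2$) is therefore positive. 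The high-multiplicity entries in the footnotes correspond to: the triple root of $C_-(R)-C$ at the third-order critical point $R=3/2$ of $C_-$ when $a=8/27$, $C=-9/8$ (marker (*)); the double root of $C_+(R)-C$ at the maximum of $C_+$ when $a=8/27$, $C=9$ (marker (**)); and the double roots of $C_-(R)-C$ at $R_2$ (resp.\ $R_3$) when $a\in(0,8/27)$ and $C=C_2$ (resp.\ $C=C_3$) (markers (***) and (****)).

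The main obstacle will be the bookkeeping over four $a$-regimes times several $C$-intervals while correctly tracking multiplicities at the degenerate $C$-values; the quadratic-in-$C$ reduction, the clean formula $C_i=(R_i^2-3R_i)/2$, and the Vieta argument above are the conceptual simplifications that render the case analysis essentially mechanical.
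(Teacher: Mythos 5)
Your reduction is sound, and every computation in it checks out: the discriminant of $q$ as a quadratic in $C$ is indeed $4096\,r$, giving the factorization $q=-16\,(C_-(R)-C)(C_+(R)-C)$ with $C_\pm(R)=R^2-\tfrac{a}{4}R^4\pm 2R$ and $R=\sqrt r$; the critical points of $C_-$ are exactly the solutions of $h(R)=a$; substituting $aR_i^3=2R_i-2$ into the lemma's expression for $C_i$ does collapse it to $(R_i^2-3R_i)/2=C_-(R_i)$; and the Vieta relation $R_1+R_2+R_3=0$ yields $C_3<C_1$ as you describe. Be aware that the paper itself contains no proof of this lemma --- it is quoted from the companion preprint \cite{PP} --- so there is no in-paper argument to compare against; your graph-counting route is a legitimate self-contained proof, and the factorization even has a transparent geometric meaning, since $C_\pm(R)=G(0,\pm R)$, so $r$ is a root of $q$ precisely when $(0,\pm\sqrt r)$ lies on the level set $G=C$.

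One concrete warning: carrying out your slice-by-slice count honestly will contradict one entry of the table as printed. For $a\in(0,\tfrac{8}{27})$ and $C<C_2(a)$ the table claims $3$ distinct roots, but since $C_2$ is the local \emph{minimum} value of $C_-$ on $(0,\infty)$, lowering $C$ below $C_2$ makes the pair of roots near $R_2$ disappear, leaving one preimage on the final decreasing branch of $C_-$ and one on the decreasing branch of $C_+$: two distinct roots, not three. A numerical check confirms this: for $a=0.2$ (so $C_2\approx-1.065$) and $C=-2$ one gets $q=-64-12.8\,r^2+1.6\,r^3-0.04\,r^4$, which changes sign exactly twice on $r>0$ and has no negative roots. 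This appears to be an error in the stated table rather than in your method (part (i) of Theorem \ref{conf space b=1} treats that parameter region as having ``the only two roots''), but you should flag the discrepancy explicitly rather than silently reproduce the printed count. Beyond that, the only places needing care are the bookkeeping at $r=0$ (a simple root exactly when $C=0$, lying on both graphs) and the ordering claims in footnotes (***) and (****); the latter require $R_2<R_3<|R_1|$, which follows from your Vieta relation, together with the observation that $C_+(R_3)=C_3+4R_3>C_3$ forces the smaller $C_+$-preimage of $C_3$ to lie below $R_3$.
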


Now that we  have discussed the roots of the polynomial $q$ we can describe the moduli space of all helicoidal rotating drops with $\Lambda=1$.

\begin{thm}\label{conf space b=1} Let $\Lambda_0=1$ and let $\Delta \tilde{\theta}$ be the function defined in (\ref{change}).  Let

$$\Omega_1=\{(a,C,\omega): C> C_1(a),\, a < 0, \, w>0\}\quad \Omega_2=\{(a,C,\omega): C_2(a) < C < C_3(a),\, 0<a< \frac{8}{27}\, , \, \omega>0\}\,$$

$$\Omega_3=\{(a,C,\omega): C< C_1(a),\, a > 0, \, \omega>0\}\quad \Omega =\Omega_1\cup\Omega_3\setminus \Omega_2$$

$$\beta_1=\{ (a,C,\omega)\, :\,  C=C_1(a),\,  a\ne 0,  \, \omega>0\}$$

$$\beta_2= \{ (a,C,\omega)\, :\,  C=C_2(a),\, 0< a<\frac{8}{27}, \, \omega>0  \} $$

$$\beta_3=\{ (a,C,\omega)\, :\,  C=C_3(a),\, 0< a< \frac{8}{27}, \, \omega>0 \} $$

Under the convention that a point $(a,C,\omega)$ represents a helicoidal rotating drop if the TreadmillSled of its profile curve is contained in the level set $G=C$, we have:

{\bf i.)}  Every point $(a,C,\omega)$ in the interior of  $\Omega$ represents a helicoidal rotating drop with its fundamental piece having finite length. The TreadmillSled of the profile curve of these surfaces are parametrized by $\rho$ defined for values of $r$ between the only two roots of the polynomial $q(r,a,C)$.

{\bf ii.)}  Every point in  $\Omega_2 $ represents two helicoidal rotating drops, both having fundamental pieces of finite length. The TreadmillSleds of the profile curves of these surfaces are parametrized by $\rho$ defined for those values of $r$  that lie between the first and second root of the polynomial $q(r,a,C)$ and the third and fourth root of the polynomial $q(r,a,C)$ respectively.

{\bf iii.) } Every point $(a,C,\omega)$ in the set $\beta_1$ represents a circular helicoidal rotating drop. This cylinder is the same for all values of $\omega$.

{\bf iv.)} Every point $(a,C,\omega)$ in the set $\beta_2$ represents two helicoidal rotating drops:  a circular cylinder and a non circular cylinder with bounded length of its fundamental piece. The circular cylinder is the same for all values of $w$.

{\bf v.)} Every point in the set $\beta_3$ represents three helicoidal rotating drops. One is a circular cylinder, which is the same for all values of $w$. The second one has a TreadmillSled parametrized by $\rho$ defined for those values of $r$  that lie between the first and second root of the polynomial $q(r,a,C)$. Recall that  the second root has multiplicity $2$. The third surface   has a TreadmillSled parametrized by $\rho$ defined for those values of $r$  that lie between the second and third root of the polynomial $q(r,a,C)$. The second and third surfaces are not properly immersed and their profile curves have a circle as a limit cycle and they have infinite winding number with respect to a point interior to this circle. Solutions similar to these second or third types  will be called  \textbf{helicoidal drops of exceptional type}.

{\bf vi.)} Points of the form $(a,C,\omega)=(\frac{8}{27},-\frac{9}{8} ,\omega)$ represent two helicoidal rotating drops:  a circular cylinder, which is the same for all values of $\omega$, and one helicoidal drop of exceptional type.

{\bf vii.)} Up to a rigid motion, every helicoidal drop falls into one of the cases above.

{\bf viii.)} Every helicoidal drop that is not exceptional is either properly immersed (when $\frac{\Delta\tilde{\theta}(a,C,\omega)}{\pi}$ is a rational number) or it is dense in the region bound by two round cylinders (when $\frac{\Delta\tilde{\theta}(a,C,\omega)}{\pi}$ is an irrational number).

\end{thm}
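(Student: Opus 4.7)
The plan is to classify helicoidal rotating drops by the root structure of the polynomial $q(r,a,C)$ provided by Lemma \ref{roots}, and then translate each configuration into geometric information about the profile curve by way of the parametrization $\rho$ of Definition \ref{def of rho}. Throughout, the TreadmillSled of a profile curve lies in a level set $G=C$, and half of such a level set is traced out by $\rho$ over an interval of $r$ on which the polynomial $q$ is positive.

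For parts (i) and (ii), the argument is elementary. On any bounded interval $[r_a,r_b]$ on which $q>0$ in the interior with simple zeros at the endpoints, $\rho$ parametrizes a regular smooth arc in the $(\xi_1,\xi_2)$-plane which, together with its reflection across the $\xi_2$-axis (Remark \ref{xi1positive}), forms a closed TreadmillSled curve; the arc-length integral (\ref{ds}) converges at each endpoint since $\sqrt{q}$ has square-root-type zeros there, so the fundamental piece has finite length. Lemma \ref{roots} tells us exactly how many such intervals occur in each region: one for $(a,C,\omega)$ in the interior of $\Omega$ and two for $(a,C,\omega)\in\Omega_2$, yielding (i) and (ii) respectively.

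For parts (iii)--(vi), the key observation is that a multiple root $r_0$ of $q$ corresponds to a critical point of the planar system (\ref{the ode}), since $f_1^2+f_2^2$ factors through $q$. A trajectory sitting at such a critical point has constant $(\xi_1,\xi_2)$, hence constant $R$, so its profile curve is a circle and the surface a round cylinder of radius determined by $r_0$; the formula for $C_i$ in Lemma \ref{roots} is precisely the condition for $r_i$ to be such a double root, and the independence of $\omega$ follows from the fact that the TreadmillSled construction decouples the profile curve from the pitch. This accounts for the circular cylinders appearing in each of (iii)--(vi). When a double (or higher) root $r_0$ is adjacent to another root $r_*$ with $q>0$ strictly between them (as on $\beta_3$ and in the degenerate case vi), $\sqrt{q}$ behaves linearly in $|r-r_0|$ near $r_0$, so both integrals (\ref{ds}) and (\ref{change}) diverge there; this means the fundamental piece has infinite length and the profile curve, obtained by inverting the TreadmillSled, spirals onto the circle of radius $\sqrt{r_0}$ as a limit cycle with infinite winding number. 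A case-by-case count using Lemma \ref{roots} then reproduces (iii)--(vi) exactly.

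Part (vii) is immediate from Proposition \ref{Q} and the scaling and orientation reductions already noted: every helicoidal rotating drop satisfies the conservation law (\ref{con}), and after normalizing to $\Lambda_0=1$, its parameters must lie in one of the enumerated strata. Part (viii) is a direct reading of Remark \ref{def of delta theta}: one full traversal of a fundamental piece of finite length rotates the profile curve by $\Delta\tilde\theta$ about the origin, so if $\Delta\tilde\theta/\pi$ is rational, then finitely many rotations close up the curve and produce a properly immersed surface invariant under a cyclic group, while if irrational, the rotation orbit is equidistributed on a circle and the profile curve is dense in the annulus bounded by the circles of radii $R_1$ and $R_2$. The main obstacle is the asymptotic analysis at double roots underlying (v) and (vi): one must verify that both integrals (\ref{ds}) and (\ref{change}) diverge with the polar angle $\tilde\theta$ changing monotonically as $r\to r_0$, so that the profile curve genuinely spirals onto the limit circle rather than oscillating; this reduces to checking that $4C+ar^2-4r$ does not vanish at $r=r_0$, which follows from the defining identity $h(R_i)=a$ for the $C_i$.
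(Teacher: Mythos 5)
Your proposal is correct and follows essentially the same route as the paper: both classify the level sets $G=C$ via the root count of $q$ in Lemma \ref{roots}, decide finite versus infinite length of the fundamental piece by the convergence or divergence of the integrals (\ref{ds}) and (\ref{change}) at simple versus multiple roots (with the divergence of the polar angle forcing the limit-cycle behavior in cases (v) and (vi)), and deduce (viii) from the rotation group generated by $\Delta\tilde\theta$. The only cosmetic difference is that you identify the circular cylinders as critical points of the system (\ref{the ode}), whereas the paper writes down the circular profile curves explicitly and verifies $G=C_i$ by direct computation.
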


\begin{figure}[ht]
\centerline{\includegraphics[width=9cm,height=5.5cm]{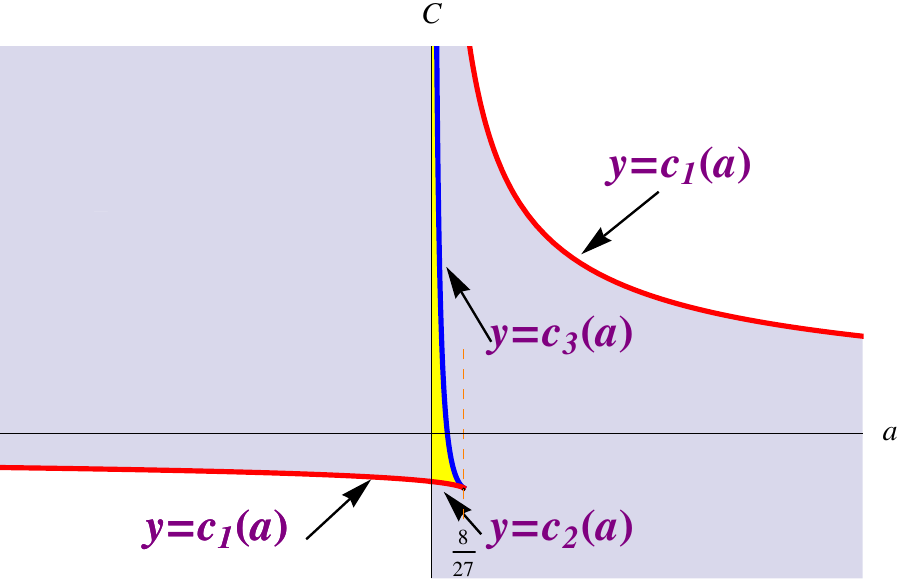}}
\label{c graphs}
\caption{ The picture above shows the graph of the functions $C_1$, $C_2$ and $C_3$, these graphs are used in Proposition \ref{conf space b=1} to describe the moduli space of all helicoidal rotational drops with $\Lambda_0=1$   }

\end{figure}

\begin{proof}

We already know that the TreadmillSled of the profile curve of any helicoidal rotating drop satisfies the equation

 $$G(\xi_1,\xi_2)=  \frac{ 2 \xi_2}{\sqrt{1+\omega^2 \xi_1^2}} + \Lambda_0 (\xi_1^2+\xi_2^2) -\frac{a}{4} (\xi_1^2+\xi_2^2)^2 = C$$

We also know that, up to rigid motions, the TreadmillSled of a curve determines the curve, see \cite{P1}.  Since any level set of $G$  can be parametrized using the map $\rho$ given in Definition \ref{def of rho}, and every parametrization  of a level set of $G$ is defined for values of $r$ where the polynomial $q$ is positive,  it then follows from Lemma \ref{roots} that every helicoidal rotating drop can be represented as one of the cases i, ii, iii, iv, v and vi. It is worth recalling, see Remark \ref{prop of rho}, that the parametrization $\rho$ only covers half of the level set of the map $G$. Each one of these level sets is  symmetric with respect to the $\xi_2$ axis, and the parametrization $\rho$ covers the half on the right.

Notice that when the profile curve is a circle, the level set $G=C$ reduces to a point. When the profile curve is a circle we will take the parametrization $\rho$ to be defined just in a point, a root with multiplicity 2 of the polynomial $q$.

When case (i) occurs, $q$ has  only  two simple roots $x_1$ and $x_2$ with $x_1<x_2$. We can check that the derivative of $q$ at $x_1$ is positive while the derivative of $q$ at $x_2$ is negative, so the length of the fundamental piece,  according to Equation (\ref{ds}), reduces to  $\int_{x_1}^{x_2}\, \sqrt{\frac{64+(4C+r(-4\Lambda_0+ar))^2\,  \omega^2}{q(r,a,C)}}\, dr $ which converges. Therefore the length of the fundamental piece is finite.


For values of $C$, $\omega$  and $a$ that fall into  case (ii),  the polynomial q has 4 roots $x_1<x_2<x_3<x_4$ and it is positive from $x_1$ to $x_2$ and from $x_3$ to $x_4$. Also, the level set of $G$ has two connected components. Half of each connected components of $G=C$ can be parametrized using the map $\rho$. One half of the connected component of $G=C$ uses the domain $(x_1,x_2)$ for $\rho$ and the half of the other connected component of $G=C$ uses the domain $(x_3,x_4)$ for $\rho$. The proof that the length of the fundamental piece of each surface is finite follows as in the proof in case (i).


For values of $(a,C,\omega)$ that satisfies the case (iii),  the polynomial $q$ has only one root $x_1=r_1$ with multiplicity two. We  take $R=\sqrt{x_1}$.  A direct calculation shows that if $a>0$, then $R_1(a)=-R$ and if we consider the profile curve $\alpha(s)=(R \sin(\frac{s}{R}),-R\cos(\frac{s}{R}))$, then $\xi_1=0$, $\xi_2=R$ and $G(\xi_1,\xi_2)= 2R+\Lambda_0R^2-\frac{a}{4}R^4$. Using the definition of $C_1$  and the fact that $R_1(a)=-R$, we can check that the expression $G(\xi_1, \xi_2)$ reduces to $C=C_1(a)$, which was our goal in order to show that the point $(a,C,\omega)$ represents a round cylinder.  Similarly, a direct verification shows that  if $a<0$, then $R_1(a)=R$ and if we consider the profile curve $\alpha(s)=(R \sin(\frac{s}{R}),R\cos(\frac{s}{R}))$, then $\xi_1=0$, $\xi_2=-R$ and $G(\xi_1,\xi_2)= -2R+\Lambda_0R^2-\frac{a}{4}R^4$. Using the definition of $C_1$  and the fact that $R_1(a)=R$, we can check that the expression $G(\xi_1, \xi_2)$ reduces to $C=C_1(a)$.  Since $r_1$ is independent of $w$ we have that these cylinders are independent of the value of $w$. This finish the proof of part (iii).


For values of $(a,C,\omega)$ that fall into case (iv), the polynomial $q$ has three roots $x_1<x_2<x_3$, where $x_1=r_1$ has multiplicity two and $x_2$ and $x_3$ are simple. The polynomial $q$ is positive for values of $r$ between $x_2$ and $x_3$. In this case the level set $G=C$ is the union of the point $(0,-\sqrt{x_1})$ and a closed curve.  If  we consider the cylinder or radius
 $\sqrt{x_1}$ oriented by the inward pointing normal, then, a direct computation shows that its mean curvature is $2 H=1-\frac{a}{2} r_1$.  Therefore this circular cylinder is a helicoidal rotating drop for the given parameters. Note that this cylinder is independent of $w$. The TreamillSled of  the profile curve of the other rotating drop is the level closed curve component of $G=C$; half of this part can be parametrized by the map $\rho$ with domain those values of $r$ between $x_2$ and $x_3$.



For values of $(a,C,\omega)$ in case (v), the polynomial $q$ has only three roots $x_1<x_2<x_3$, where $x_2=r_2$ has multiplicity two and $x_1$ and $x_3$ are simple. The polynomial $q$ is positive for values of $r$ between $x_1$ and $x_2$ and for values of $r$ between $x_2$ and $x_3$.  In this case the level set $G=C$ is connected but it self-intersects at the point $(0,-\sqrt{x_2})$. Any part of a curve that crosses the $\xi_2$-axis non-horizontally cannot be the TreadmillSled of a regular curve (see Proposition 2.11 in \cite{P3}). Therefore the correct way to view the level set $G=C$ in this case is not as a connected closed curve that self intersects but as the union of two curves and a point. Figure \ref{levelsetG} shows one of these level sets.


\begin{figure}[ht]
\centerline{\includegraphics[width=5cm,height=5cm]{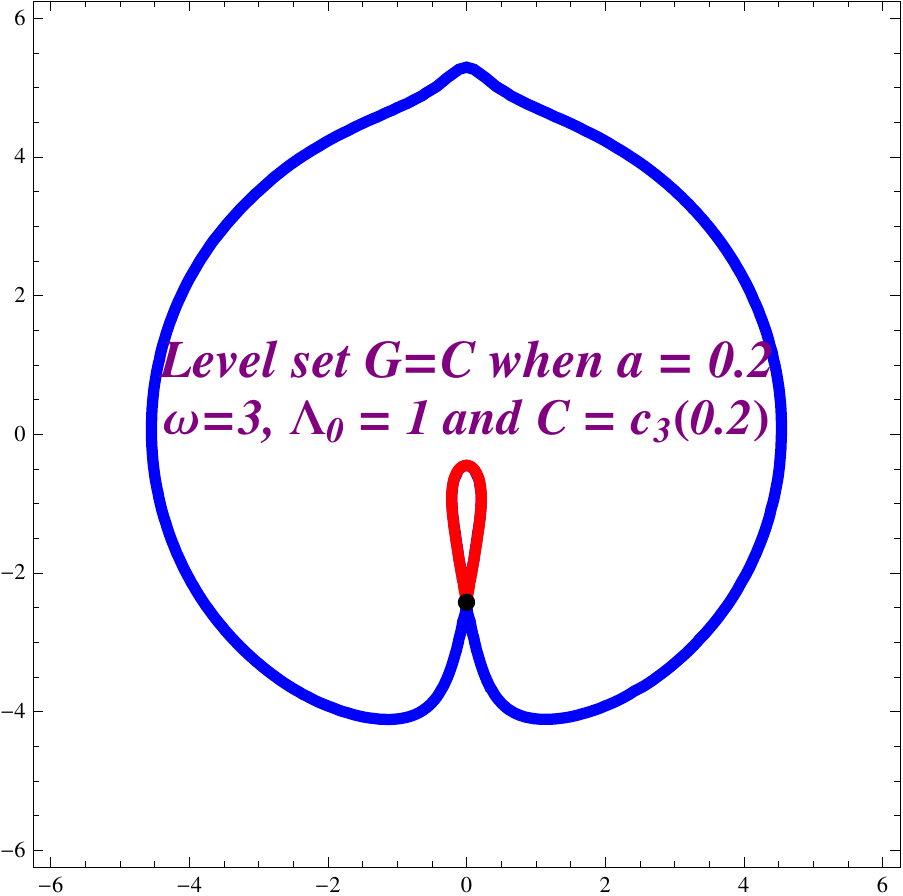}}
\caption{  For helicoidal drops in case (v) of Proposition \ref{conf space b=1}, the level set of $G$ should be regarded as the union of two curves and a point. Each curve is the TreadmillSled of an exceptional helicoidal rotating drop and the point is the treadmillSled of a circular cylinder .}.
\label{levelsetG}
\end{figure}

 One can check that the circular cylinder with radius $\sqrt{r_2}$, oriented by the inward pointing normal satisfies the $2 H=1-\frac{a}{2} r_2$, therefore this circular cylinder is a helicoidal rotating drop for the given parameters. The TreadmillSled associated with the profile curve of this round cylinder reduces to the point $(0,-\sqrt{x_2})$. The set $G=C$ minus the point $(0,-\sqrt{x_2})$ has two connected components. One of these connected components can be parametrized using the map $\rho$ with values of $r$ between $x_1$ and $x_2$ and the other using the map $\rho$ with values of $r$ between $x_2$ and $x_3$. Each of these connected components  is the TreamillSled of the fundamental curve for a rotating helicoidal drop whose length is unbounded. Specifically, their lengths are given respectively by the divergent integrals $\int_{x_1}^{x_2} \sqrt{\frac{64+(4C+r(-4\Lambda_0+ar))^2\, \omega^2}{q(r,a,C)}}\, dr$ and $\int_{x_2}^{x_3} \sqrt{\frac{64+(4C+r(-4\Lambda_0+ar))^2\, \omega^2}{q(r,a,C)}} \, dr$.  Moreover, using the definition of TreadmillSled, we notice that the function giving the distance to the origin of the profile curve, $|(x(s),y(s)|$, agrees with the function distance to the  origin of the level set $G=C$ given by $|(\xi_1(s),\xi_2(s))|=|\rho(\sigma^{-1}(s))|$. Therefore  as $r$ aproaches $r_2$, $s=\sigma(r)$ goes to $-\infty$ and the function $|(x(s),y(x)|$ approaches $\sqrt{r_2}$. Since polar angle of the profile curve can be calculated by integrating the expression in (\ref{dtheta}), we conclude that $\tilde{\theta}(r)$ also goes to $-\infty$ as $r$ approaches $r_2$. We conclude that the profile curve has a circle of radius $\sqrt{r_2}$ as a limit cycle and it has infinite winding number with respect to a point interior to this circle.



For values of $(a,C,w)$ that satisfies the case (vi) the polynomial $q$ has only two roots $x_1<x_2$ where $x_1=\frac{9}{4}$ has multiplicity three and $x_2=\frac{81}{4}$ is simple. The polynomial $q$ is positive for values of $r$ between $x_1$ and $x_2$. In this case the level set $G=C$ is connected but it has a singularity  at the $(0,-\frac{3}{2})$.  We can check that the circular cylinder with radius $\frac{3}{2}$ oriented by the inward pointing normal satisfies the $2 H=1-\frac{a}{2} r_1$, therefore this circular cylinder is a helicoidal rotating drop. The TreadmillSled associated of the profile curve of this round cylinder reduces to the point $(0,-\frac{3}{2})$. The set $G=C$ minus the point $(0,-\frac{3}{2})$ is connected and half of it  can be parametrized using the map $\rho$ with values of $r$ between $\frac{9}{4}$ and $\frac{81}{4}$. This part of the set $G=C$ is the TreamillSled of the fundamental curve of a rotating helicoidal drop whose length is not bounded.



Since we know that the profile curve of every rotating helicoidal drop satisfies the integral equation $G=C$ and cases (i)-(vi) cover all the possible level sets for the level sets of $G$ then every rotating helicoidal drop fall into one of the first 6 cases of this proposition. This proves (vii). 

In order to prove (viii) we notice that when a helicoidal rotating drop is not exceptional, it has a fundamental piece with finite length whose TreadmillSled is a closed regular curve (a connected component of the set $G=C$). By the properties of the TreadmillSled operator (in particular the one that states that the TreadmillSled inverse is unique up to rotations about the origin), we have the the whole profile curve is a union of rotations of the fundamental piece. The angle of rotation is given by $\Delta\tilde{\theta}=\Delta\tilde{\theta}(C,a,\omega,x_1,x_2)$. We therefore have that the profile curve is invariant under the group of rotations

\begin{eqnarray}\label{the group}
\mathbb{G}=\{(y_1,y_2)\rightarrow (\cos(n \Delta\tilde{\theta}) y_1 +  \sin(n \Delta\tilde{\theta}) y_2 , -\sin(n \Delta\tilde{\theta}) y_1+  \cos(n \Delta\theta) y_2 )\, \vert \, n\in \mathbb{Z}\} \:.
\end{eqnarray}

It is clear that if $\frac{\Delta\tilde{\theta}}{\pi}$ is a rational number then the group $\mathbb{G}$ is finite and the helicoidal surface is properly immerse. Moreover, if $\frac{\Delta\tilde{\theta}}{\pi}$ is not a rational number, then the group $\mathbb{G}$ is not finite and the helicoidal drop is dense in the region bounded by the two cylinders of radius $\sqrt{r_1}$ and $\sqrt{r_2}$. A more detailed explanation of this last statement can be found in \cite{P1}.


\end{proof}

\subsection{Embedded and properly embedded examples}

In this subsection we will find some embedded examples and we will show their profile curves. As pointed out before, when the helicoidal drop is not exceptional, its profile curve is 
a union rotations of fundamental pieces that ends up being invariant under the group $\mathbb{G}$ define in (\ref{the group}). It is not difficult to see that a necessary condition for the helicoidal drop to be embedded is that $\Delta \tilde{\theta}=\frac{2 \pi}{m}$ for some integer $m$. We will show that this condition is not sufficient. In order to catch the potentially embedded examples we need to understand the function $\Delta \tilde{\theta}(C,a,\omega,x_1,x_2)$. As a very elementary technique to solve the equation $\Delta \tilde{\theta}=\frac{2 \pi}{m}$ we will use the intermediate value theorem. We know that for any $a$, there is a first (or last) value of $C$, $C_0(a,x_1,x_2)$, for which the function $\Delta \tilde{\theta}$ is defined. We will compute the limit of $\Delta \tilde{\theta}$ when $C$ goes to $C_0$ using Lemma \ref{lemma 1}. The graphs shown in this paper were generated using the software Mathematica 8 .

\subsection{Embedded examples with $\Lambda_0=0$ and $a=-1$} From Lemma \ref{roots Lamda0=0} we know that the polynomial $p$ has two positive root if and only if $C>C_0=-\frac{3}{\sqrt[3]{4}}$. A direct application of Lemma \ref{lemma 1} shows:

\begin{prop}
If $\Lambda_0=0$,   $a=-1$ and for any $C>C_0$, $x_1$ and $x_2$ denote the two roots of the polynomial $q(r,C)=-16 C^2 + 64 r + 8 C r^2 - r^4 $, then 

$$\lim_{C\to C_0^+} \Delta\theta(C,\omega,x_1,x_2)=\, \int_{x_1}^{x_2}  \frac{(4C- r^2) \sqrt{1+r\omega^2}}{r \sqrt{q(r,C)}}\, dr=B(\omega)=
-\frac{2 \pi \sqrt{1+\sqrt[3]{4}\, \omega^2} }{\sqrt{3}}$$
holds.
\end{prop}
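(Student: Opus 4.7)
The plan is to recognize the integral as exactly the one controlled by Lemma \ref{lemma 1}, with the double-root configuration occurring as $C \to C_0^+$. Setting $\Lambda_0 = 0$, $a = -1$ in the general formula (\ref{change}) for $\Delta\tilde{\theta}$ immediately yields the stated integral expression; so the content of the proposition is really the limit evaluation. I would write the integrand in the split form $f(C,r)/\sqrt{g(C,r)}$ with
\[
f(C,r) = \frac{(4C - r^2)\sqrt{1+r\omega^2}}{r}, \qquad g(C,r) = q(r,C) = -16C^2 + 64r + 8Cr^2 - r^4,
\]
so that verifying the hypotheses of Lemma \ref{lemma 1} at the critical pair $(C_0, r_0) = \bigl(-3/\sqrt[3]{4},\, \sqrt[3]{4}\bigr)$ reduces the problem to three elementary calculations.

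Next I would check the three required conditions on $g$ at $(C_0, r_0)$. By Lemma \ref{roots Lamda0=0}, the polynomial $q(r, C_0)$ has $r_0 = \sqrt[3]{4}$ as its unique real root, so $g(C_0, r_0) = 0$ and $\partial_r g(C_0, r_0) = 0$ follow from the fact that this is a double root (and one verifies directly from $\partial_r q = 64 + 16Cr - 4r^3$ that $\partial_r q(r_0, C_0) = 64 - 48 - 16 = 0$). Computing $\partial_r^2 q = 16C - 12 r^2$ at $(C_0, r_0)$ gives $-48/\sqrt[3]{4} - 12\,\sqrt[3]{16} = -24 \cdot 4^{2/3}$, so we may take $A = 12\cdot 4^{2/3} > 0$ in the notation of Lemma \ref{lemma 1}, with $\sqrt{A} = 2\sqrt{3}\cdot 4^{1/3}$. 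For any sequence $C_n \to C_0^+$, the hypotheses on the enclosing roots $u_n, v_n$ of $q(\cdot, C_n)$ are then automatic from Lemma \ref{roots Lamda0=0}.

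Finally I would evaluate $f(C_0, r_0)$: since $4C_0 - r_0^2 = -12\cdot 4^{-1/3} - 4^{2/3} = -16\cdot 4^{-1/3}$, one obtains
\[
f(C_0, r_0) = \frac{-16\cdot 4^{-1/3}\sqrt{1+\sqrt[3]{4}\,\omega^2}}{4^{1/3}} = \frac{-16\sqrt{1+\sqrt[3]{4}\,\omega^2}}{4^{2/3}}.
\]
Multiplying by $\pi/\sqrt{A}$ gives $-16\pi\sqrt{1+\sqrt[3]{4}\,\omega^2}/(8\sqrt{3}) = -2\pi\sqrt{1+\sqrt[3]{4}\,\omega^2}/\sqrt{3}$, which is precisely $B(\omega)$.

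No real obstacle arises: the argument is purely a direct verification. The only mildly delicate point is being careful with the powers of $4$ (and the minus sign from orienting the profile curve so that $\xi_2 < 0$ at the degenerate limit), which ensures the sign in $B(\omega)$ comes out negative. Since this exact computation was already announced in (\ref{lowerbound}), I would simply present it as the formal verification backing up that earlier claim.
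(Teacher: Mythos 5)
Your proposal is correct and follows exactly the paper's route: the paper proves this proposition simply by declaring it ``a direct application of Lemma \ref{lemma 1}'' with $f(C,r)=(4C-r^2)\sqrt{1+r\omega^2}/r$ and $g(C,r)=q(r,C)$, as already announced at (\ref{lowerbound}). Your write-up supplies the explicit arithmetic the paper leaves implicit ($\partial_r q(r_0,C_0)=0$, $A=12\cdot 4^{2/3}$, $f(C_0,r_0)=-16\cdot 4^{-2/3}\sqrt{1+\sqrt[3]{4}\,\omega^2}$), and all of it checks out.
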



\begin{figure}[ht]
\centerline{ \includegraphics[width=4cm,height=3cm]{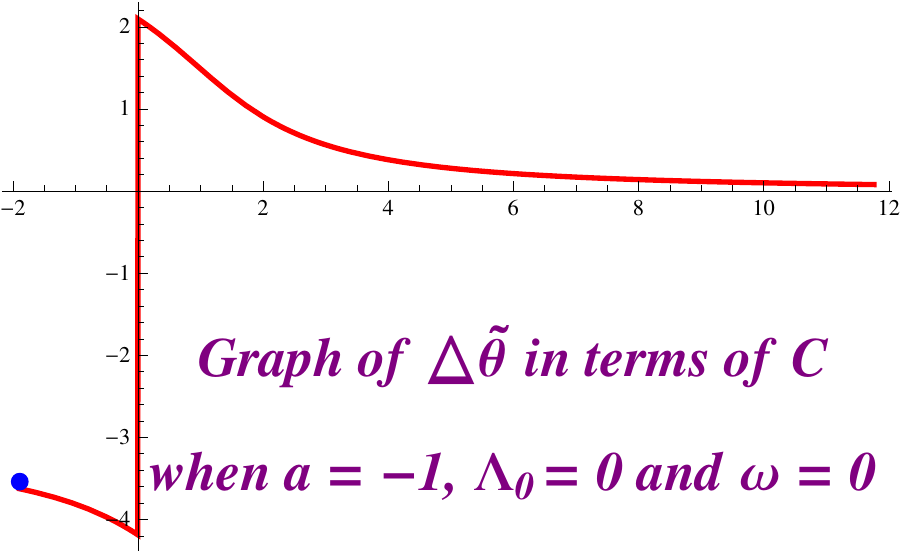}\includegraphics[width=4cm,height=3cm]{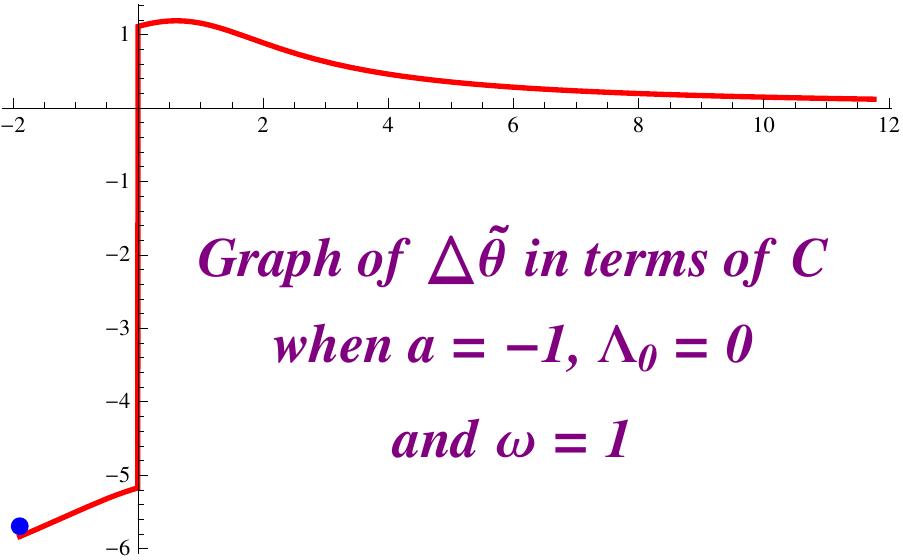}\includegraphics[width=4cm,height=4cm]{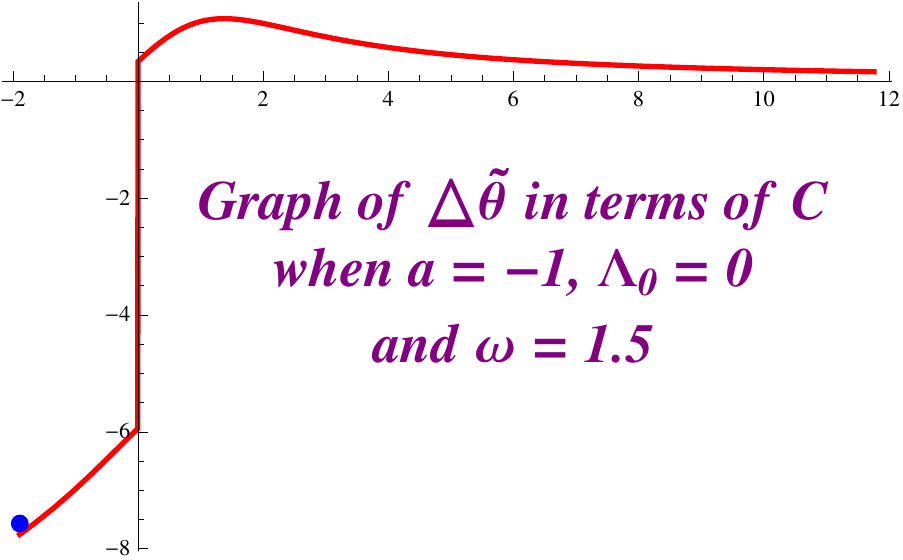}\includegraphics[width=3.5cm,height=3cm]{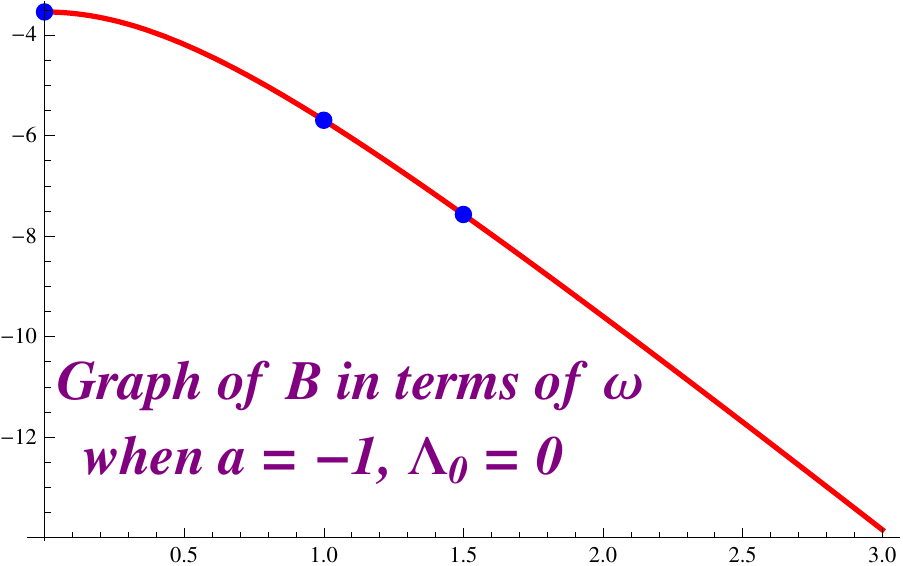}}
\label{limit1}
\caption{ The last graph shows how the beginning of the graphs on the left change when $\omega$ changes. The highlighted points in the last graph ($\omega=0$, $1$ and $1.5$) correspond to the highlighted points in the three graphs to the left. For $\omega=0$ there is no solution of the equation $\Delta{\tilde \theta}=-\frac{2\pi}{m}$ with negative values of $C$. We see that,  for some values of $\omega$, the equation $ \Delta{\tilde \theta}=-2\pi$ has a solution with $C$ negative, which is responsible for the existence of embedded examples with $\Lambda_0=0$ and $a=-1$. }
\end{figure}


\begin{figure}[ht]
\centerline{\includegraphics[width=3.5cm,height=3.5cm]{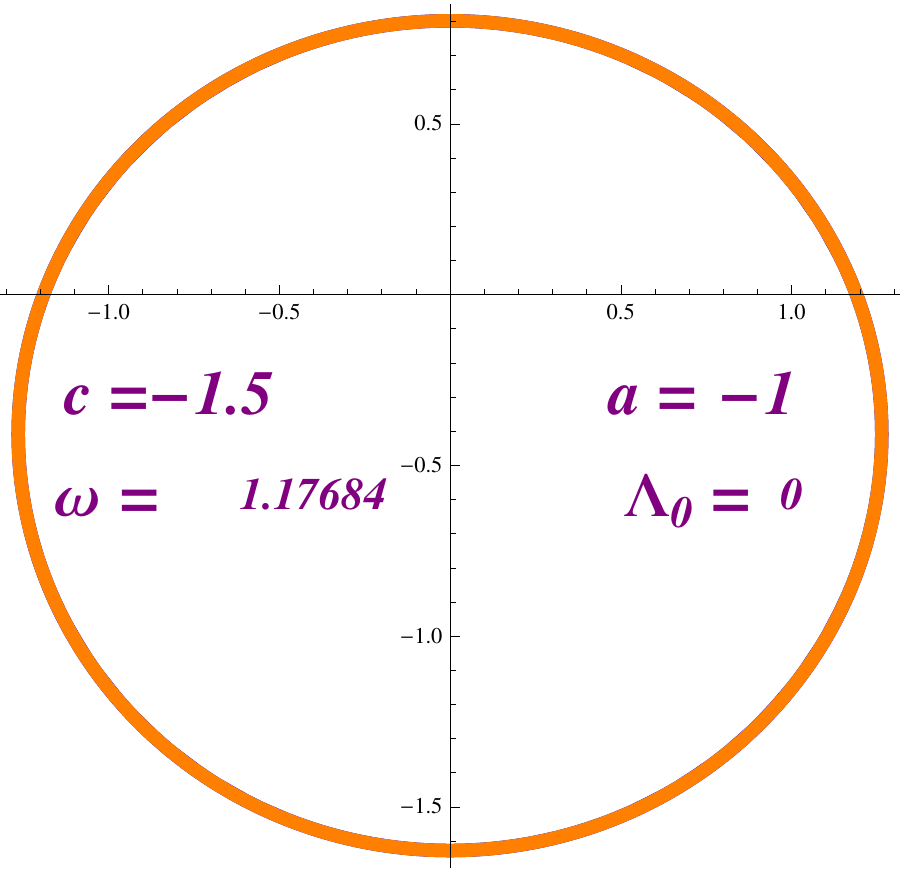}\includegraphics[width=3.5cm,height=3.5cm]{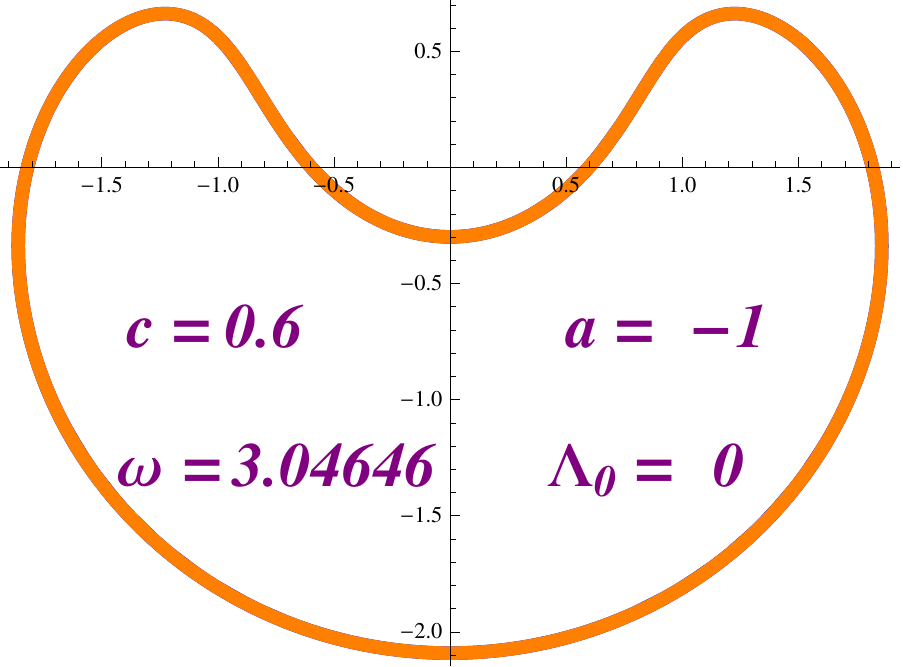}\includegraphics[width=3.5cm,height=3.5cm]{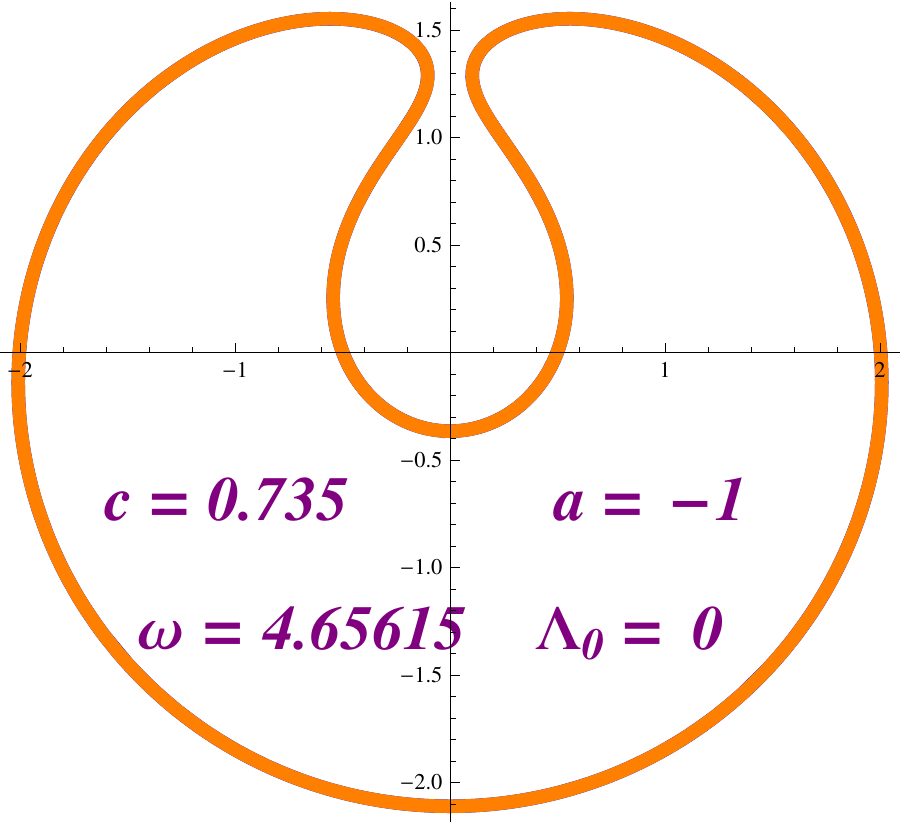}}
\caption{ The profile curves of some embedded helicoidal rotating drops when $\Lambda_0=0$ and $a=-1$. When $C$ is close to the critical value $C_0,$ the embedded examples is close to a round cylinder. As $C$ increases, the shape develops a self-intersection.}
\label{embset1}
\end{figure}

\begin{figure}[ht]
\centerline{\includegraphics[width=3.5cm,height=3.5cm]{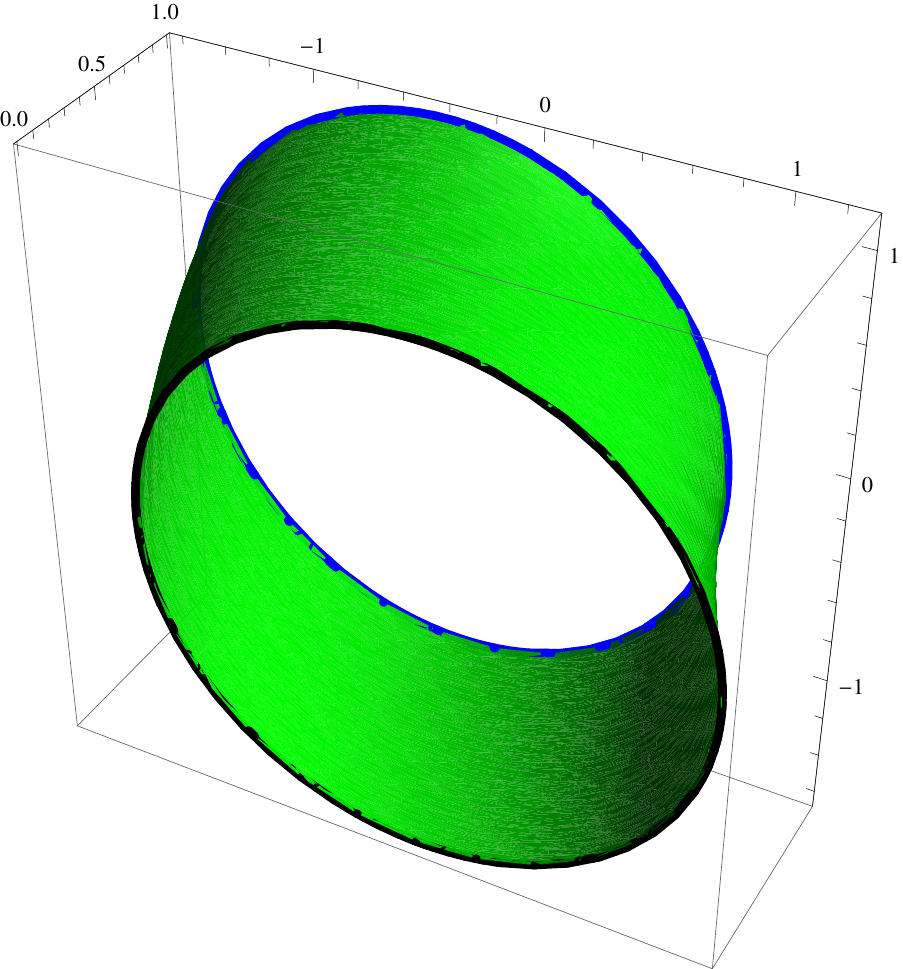}\includegraphics[width=3.5cm,height=3.5cm]{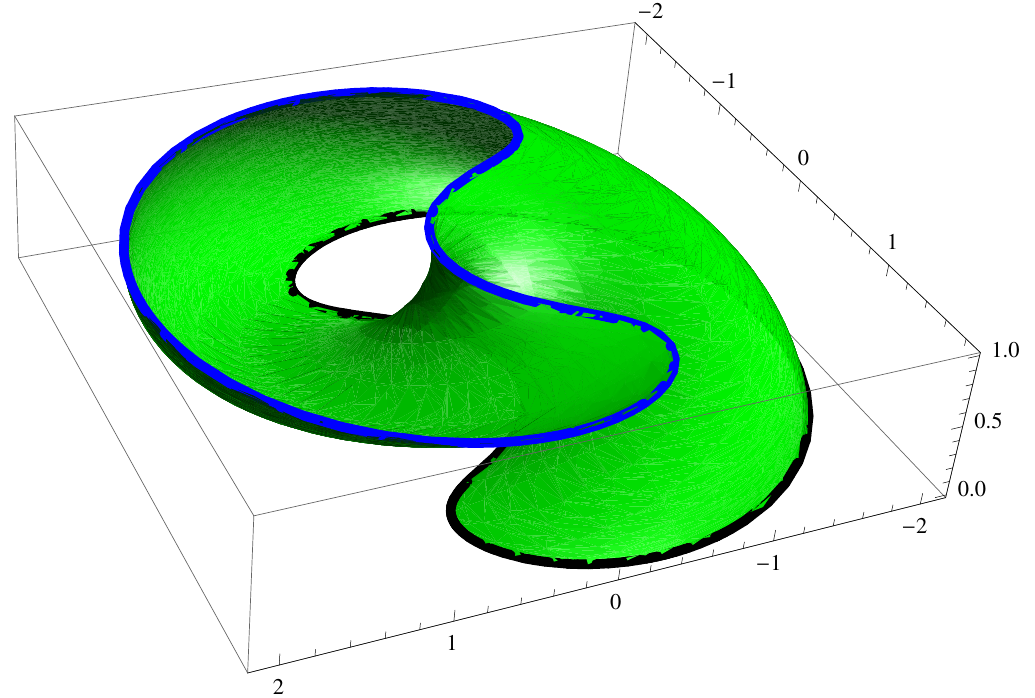}\includegraphics[width=3.5cm,height=3.5cm]{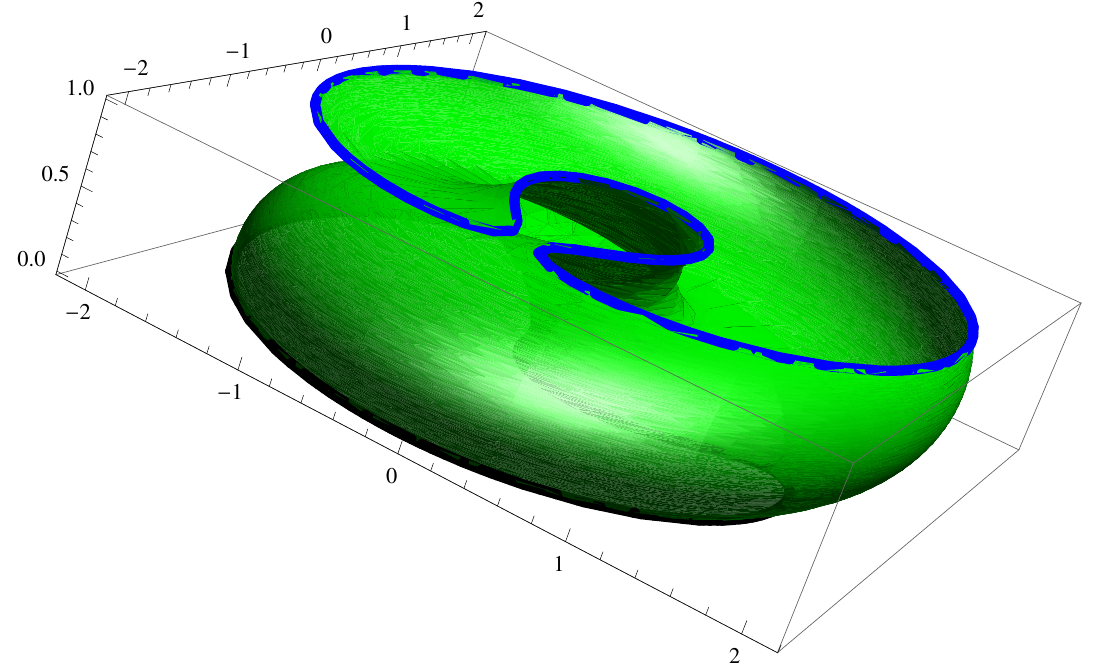}}
\caption{ Surfaces associated with the profile curve on Figure \ref{embset1}\:.}
\label{embset22}
\end{figure}

Using the Intermediate Value Theorem,  we can numerically solve the equation  $ \Delta\tilde{ \theta}=-2\pi$  for values of $w$ and $C$. The images in Figures \ref{embset1} and \ref{embset22} show some of the resulting profile curves and the corresponding surfaces. They also show the values of $C$ and $w$ that solve the equation $ \Delta \tilde{ \theta}=-2\pi$.

\subsection{Embedded examples with $\Lambda_0=1$ and $a\ne 0$}

We now show  some embedded examples in this case. Again  the Intermediate Value Theorem is used to numerically solve the equation 
$\Delta\tilde{\theta}=\frac{2 \pi}{m}$. A direct application of Lemma \ref{lemma 1} shows:

\begin{prop}
Let $r_1$ $r_2$ and $r_3$ be the expressions given in Definition \ref{def of ri} and let $C_1$, $C_2$ and $C_3$ be the expression defined in Lemma \ref{roots}. For $i=1,2 $ let us defined the following two bounds.

$$b_i(a,w)= \frac{\pi (4 C_i + a r_i^2-4r_i) \sqrt{1+ r_i w^2}}{r_i \sqrt{16 + 8 a (C - 3 r_i) + 6 a^2 r_i^2}}\:.$$

{\bf a.)} If $a<0$ then $\lim_{C\to C_1(a)^+}\Delta\tilde{ \theta}(C,a,x_1,x_2)=b_1(a)$. Here $x_1$ and $x_2$ are the first two roots of the polynomial $q(r,C,a)$

{\bf b.)} If $a>0$ then $\lim_{C\to C_1(a)^-}\Delta\tilde{ \theta}(C,a,x_1,x_2)=b_1(a)$. Here $x_1$ and $x_2$ are the first two roots of the polynomial $q(r,C,a)$

{\bf c.)} If $a>0$ then $\lim_{C\to C_2(a)^+}\Delta\tilde{ \theta}(C,a,x_1,x_2)=b_2(a)$. Here $x_1$ and $x_2$ are the first two roots of the polynomial $q(r,C,a)$.

\end{prop}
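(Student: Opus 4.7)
The plan is to apply Lemma \ref{lemma 1} directly to the integral representation of $\Delta\tilde{\theta}$ from equation (\ref{change}) with $\Lambda_0 = 1$. Writing the integrand as $f(C,r)/\sqrt{g(C,r)}$ with
$$f(C,r) = \frac{(4C + a r^2 - 4r)\sqrt{1 + r\omega^2}}{r}, \qquad g(C,r) = q(r,C,a),$$
both are smooth in a neighborhood of any candidate critical point $(C_i(a), r_i(a))$, and as $C \to C_i(a)$ a pair of simple roots of $q$ collapses into a double root at $r_i(a)$. This is exactly the degenerate configuration that Lemma \ref{lemma 1} is designed to handle.

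The first step is to verify the hypotheses of Lemma \ref{lemma 1} in each of the three cases by reading off the appropriate row of the root-structure table in Lemma \ref{roots}. For part (a), with $a < 0$ and $C > C_1(a)$, the polynomial $q$ has exactly two simple roots bracketing $r_1(a)$ which merge at $r_1(a)$ as $C \to C_1(a)^+$. For part (b), with $a > 0$ and $C < C_1(a)$, two simple roots likewise bracket $r_1(a)$ and merge as $C \to C_1(a)^-$. For part (c), with $0 < a < 8/27$ and $C_2(a) < C < C_3(a)$, the polynomial $q$ has four simple roots $x_1<x_2<x_3<x_4$; as $C \to C_2(a)^+$ the first pair $x_1,x_2$ collapse onto $r_2(a)$. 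In each case $q > 0$ strictly between the two collapsing roots, so the positivity hypothesis of Lemma \ref{lemma 1} is met.

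The second step is to compute $A := -\tfrac{1}{2}\partial_r^2 q\big|_{(C_i, r_i)}$. A direct differentiation of $q$ gives
$$\partial_r^2 q = -32 - 16 a C + 48 a r - 12 a^2 r^2,$$
so that
$$A(C,r) = 16 + 8a(C - 3r) + 6a^2 r^2,$$
which matches the expression appearing under the square root in the denominator of $b_i(a,\omega)$. Positivity of $A(C_i, r_i)$ is automatic: since $q$ has negative leading coefficient and is strictly positive on an interval adjacent to its double root, that double root must be a strict local maximum of $q(\cdot, C_i)$, so $\partial_r^2 q(C_i, r_i) < 0$. Applying Lemma \ref{lemma 1} then yields
$$\lim \Delta\tilde{\theta} \;=\; f(C_i, r_i)\cdot \frac{\pi}{\sqrt{A(C_i, r_i)}} \;=\; \frac{\pi(4 C_i + a r_i^2 - 4 r_i)\sqrt{1 + r_i \omega^2}}{r_i \sqrt{16 + 8 a (C_i - 3 r_i) + 6 a^2 r_i^2}} \;=\; b_i(a, \omega),$$
as claimed.

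The main obstacle is bookkeeping rather than analysis: one must match the direction of approach ($C \to C_i^+$ versus $C \to C_i^-$) to the correct side of the relevant degeneration in the root diagram of Lemma \ref{roots}, and in case (c) one must correctly isolate the first pair among the four simple roots. Once these configurations are identified, the application of Lemma \ref{lemma 1} and the computation of $A$ are routine.
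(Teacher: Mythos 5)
Your proposal is correct and follows essentially the same route as the paper: both apply Lemma \ref{lemma 1} to the integral (\ref{change}) with $g=q(r,C,a)$, identify the collapsing pair of roots at $r_i$ from the table in Lemma \ref{roots}, and compute $A=-\tfrac{1}{2}q''(r_i)=16+8a(C-3r_i)+6a^2r_i^2$. Your write-up is somewhat more explicit than the paper's (checking positivity of $A$ and the direction of approach in each case), but the argument is the same.
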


\begin{proof}
 
Since  $\Delta\tilde{ \theta}=\int_{x_1}^{x_2}\frac{(4C+a r^2-4r)\sqrt{1+r\omega^2}}{r\sqrt{q}} \, dr$,  in every case, when $C$ approaches the limit value, the two roots approach $r_i$ (i=1 or 2),  which is a root of $q$ with multiplicity $2$. Therefore Lemma \ref{lemma 1} applies and the proposition follows. Notice that the value $A$ in Lemma  \ref{lemma 1} is given by 

$$A=-\frac{1}{2}q^{\prime\prime}(r_i)=16 + 8 a (C - 3 r_i) + 6 a^2 r_i^2\:.$$

\end{proof}

\begin{rem} When $0<a<\frac{8}{27}$ the domain of the function $\Delta\tilde{\theta}$ starts at $C=C_2(a)$ and the limit of the function $\Delta\tilde{\theta}(C)$ when $C\to C_2(a)^+$ is $b_2(a,w)$. Moreover, this function has a vertical asymptote at $C=C_3(a)$, and then it has a jump discontinuity at $C=0$. Finally, this function is define for all  values of $C$ smaller than $C_1(a)$ and the limit of $\Delta\tilde{\theta}(C)$ when $C\to C_1(a)^-$ is $b_1(a,w)$

\end{rem}

\begin{figure}[ht]
\centerline{\includegraphics[width=6.5cm,height=4cm]{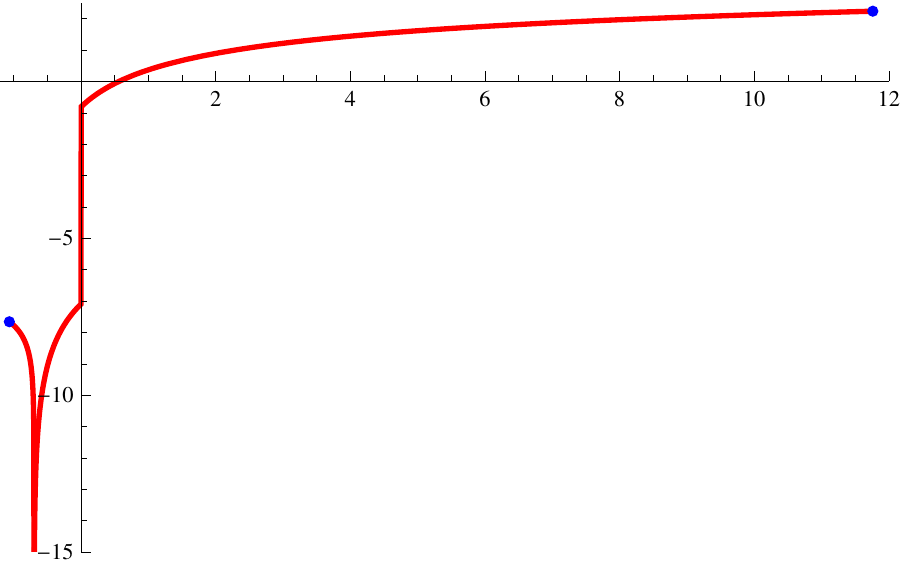}}
\caption{ The graph of the function $\Delta\tilde{\theta}$ when $a=0.2$ and $\omega=0.15$. In this case $C_2\approx -1.065$, $b_2\approx -7.66$, $C_3\approx -0.698$, $C_1\approx 11.76$ and $b_1\approx 2.23$. The points $(C_2,b_2)$ and $(C_1,b_1)$ have been highlighted.}
\label{graphD}
\end{figure}

Taking a look at Figure \ref{graphD} we notice that, when $ \omega=0.15$ and $a=0.2$, we have that for any integer  $m>2$, the equation $\Delta\tilde{\theta}=\frac{2 \pi}{m}$ has a solution. We have numerically solved this equation for $m=4$ and $m=8$. Figures \ref{hemb1} and \ref{hemb2} provides a picture of the profile curves of the  properly immerse examples.

\begin{figure}[ht]
\centerline{\includegraphics[width=5.5cm,height=3.5cm]{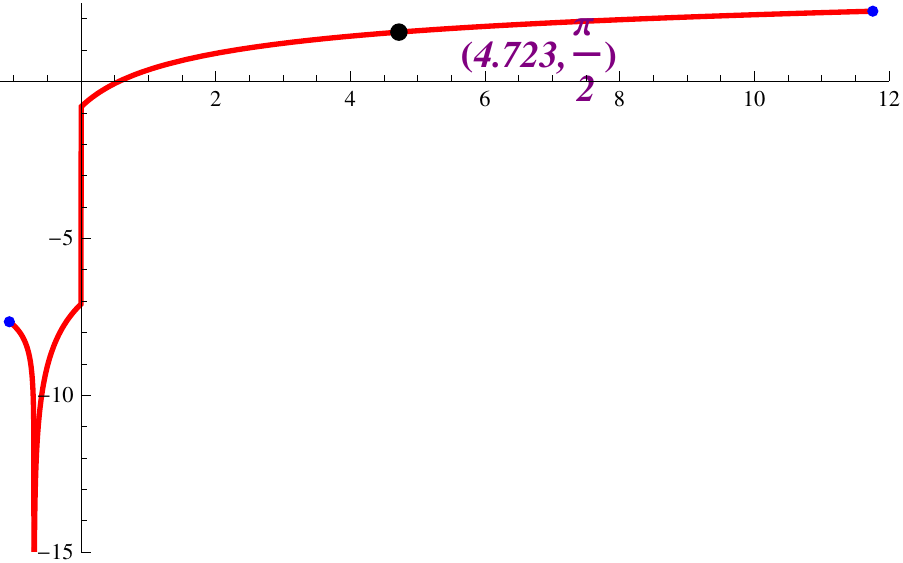}\hskip.3cm \includegraphics[width=3.5cm,height=3.5cm]{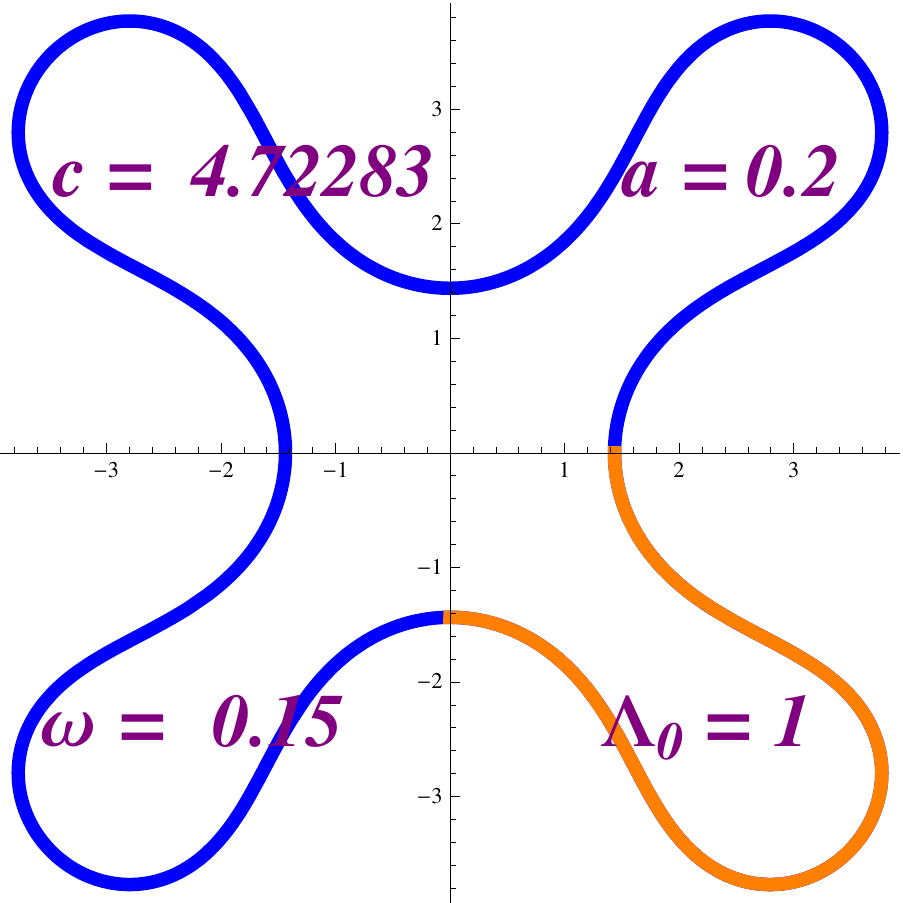}\hskip.3cm\includegraphics[width=3.5cm,height=3.5cm]{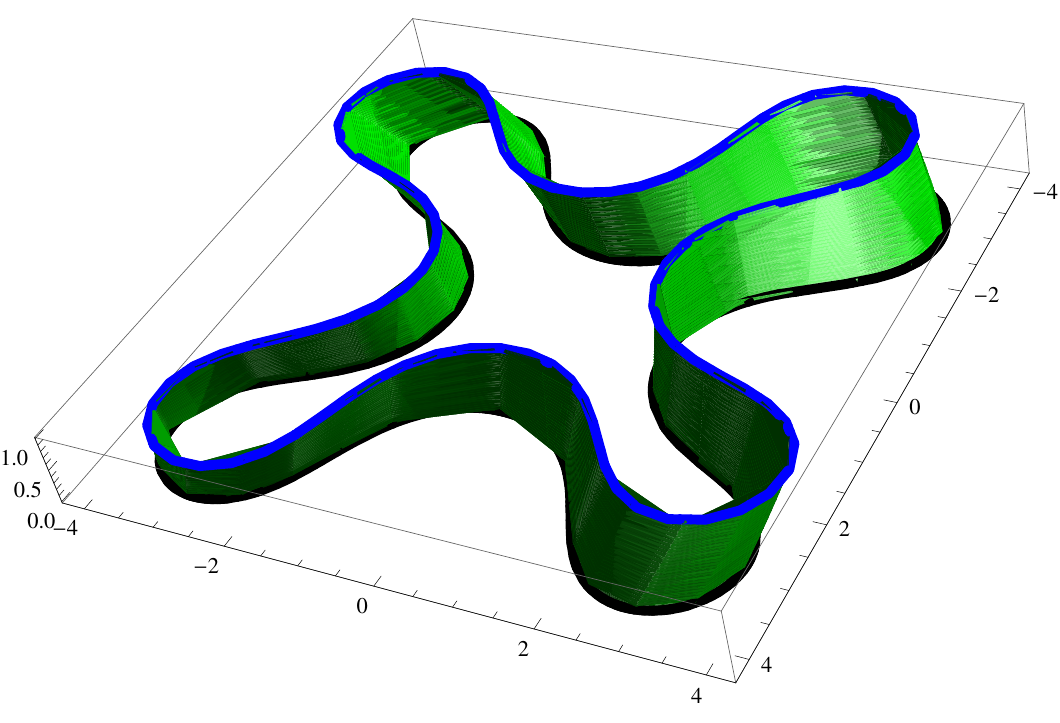}}
\caption{ The embedded helicoidal rotational drop obtained by solving the equation
 $\Delta\tilde{\theta}=\frac{\pi}{2}$ when $a=0.2$ and $\omega=0.15$.  }
\label{hemb1}
\end{figure}

\begin{figure}[ht]
\centerline{\includegraphics[width=5.5cm,height=3.5cm]{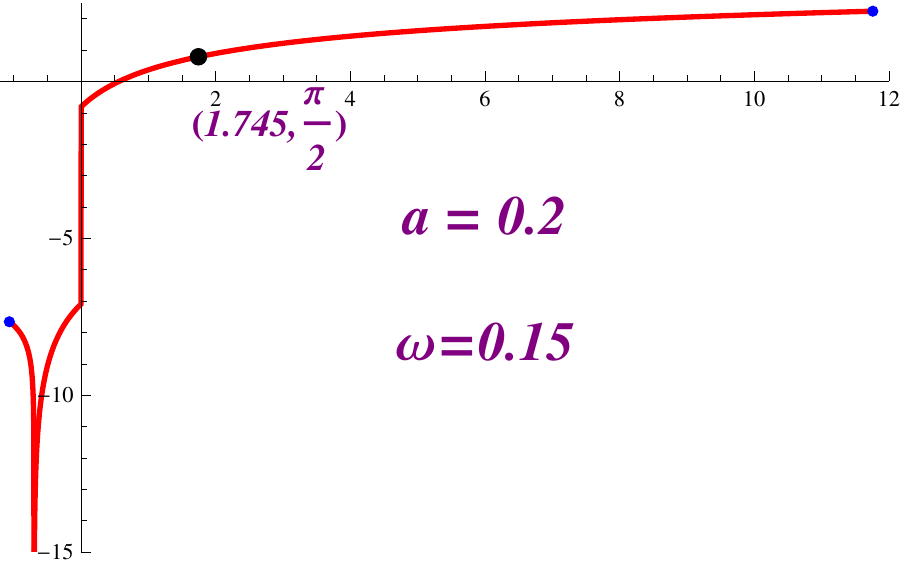}\hskip.3cm\includegraphics[width=3.5cm,height=3.5cm]{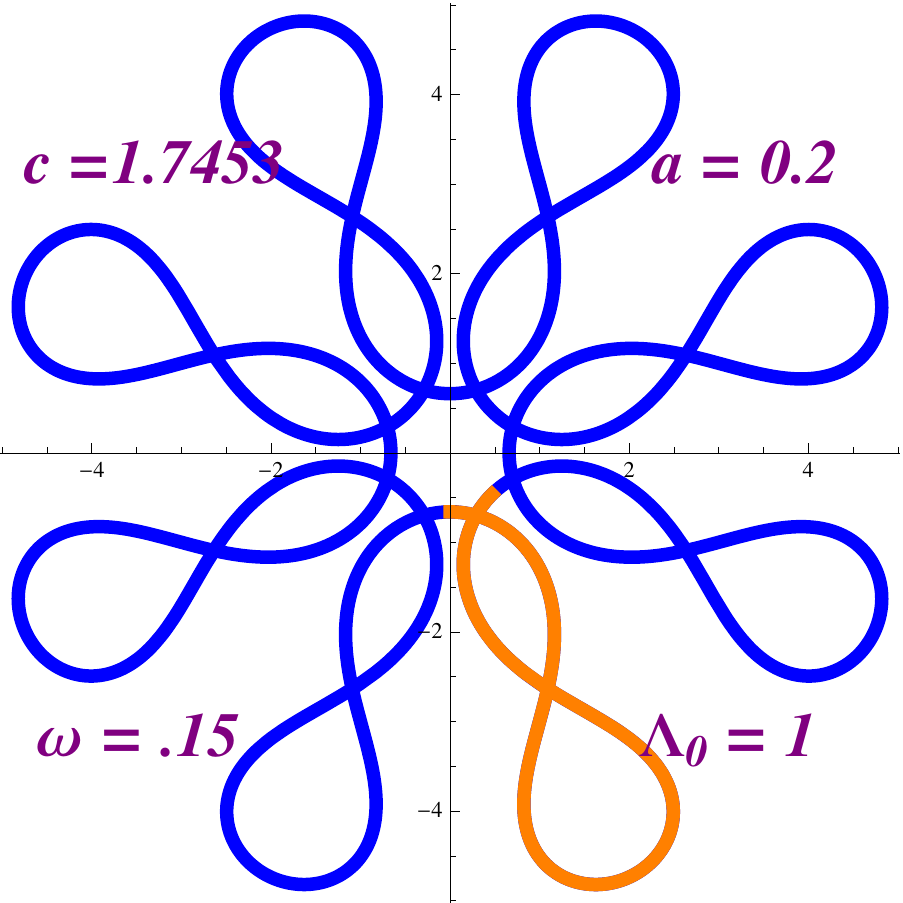}}
\caption{ The properly immerse helicoidal rotational drop obtained by solving the equation
 $\Delta\tilde{\theta}=\frac{\pi}{4}$ when $a=0.2$ and $\omega=0.15$.   }
\label{hemb2}
\end{figure}

We have that if we decrease the value of $a$ while keeping the value of $\omega$  constant, we can again solve the equation  $\Delta\tilde{\theta}=\frac{\pi}{4}$ but this time the helicoidal rotational drop is embedded. See Figure \ref{hemb3}.

\begin{figure}[ht]
\centerline{\includegraphics[width=5.5cm,height=3.5cm]{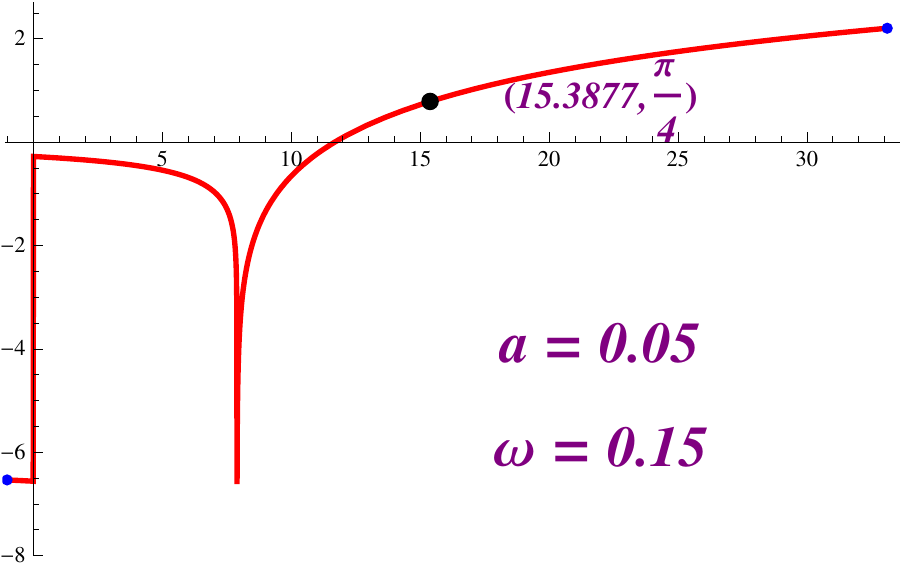}\hskip.3cm\includegraphics[width=3.5cm,height=3.5cm]{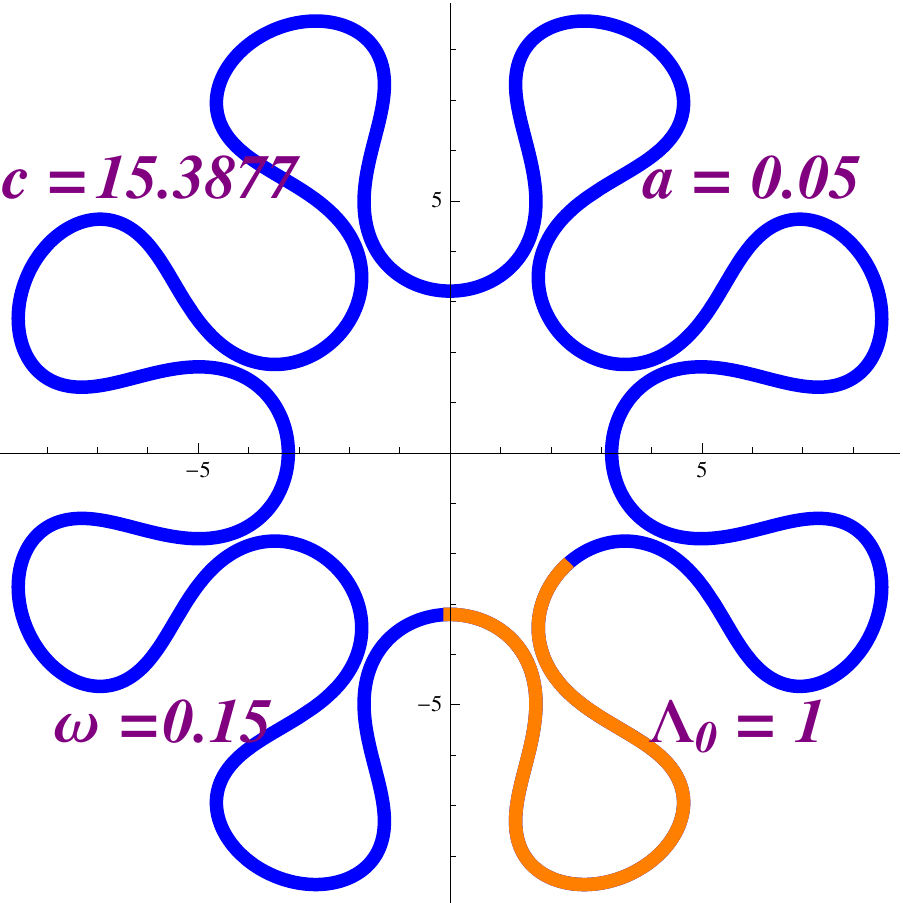}}
\caption{ The embedded helicoidal rotational drop obtained by solving the equation
 $\Delta\tilde{\theta}=\frac{\pi}{4}$ when $a=0.05$ and $\omega=0.15$.  }
\label{hemb3}
\end{figure}

Finally we would like to show that if we increase  $\omega$ then it is possible to solve the equation $\Delta\tilde{\theta}=2 \pi$, (see Figure    \ref{hemb4}) .

\begin{figure}[h!]
\centerline{\includegraphics[width=5.5cm,height=3.5cm]{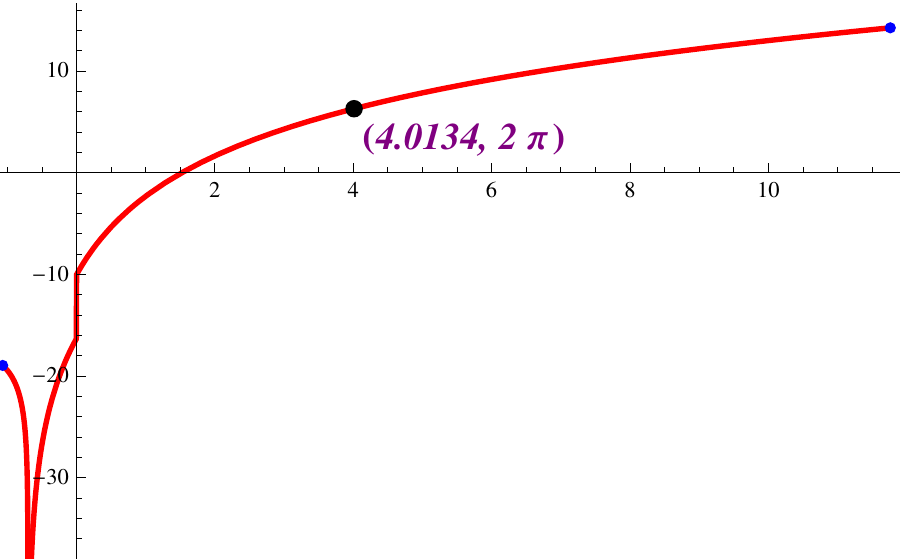}\hskip.3cm \includegraphics[width=3.5cm,height=3.5cm]{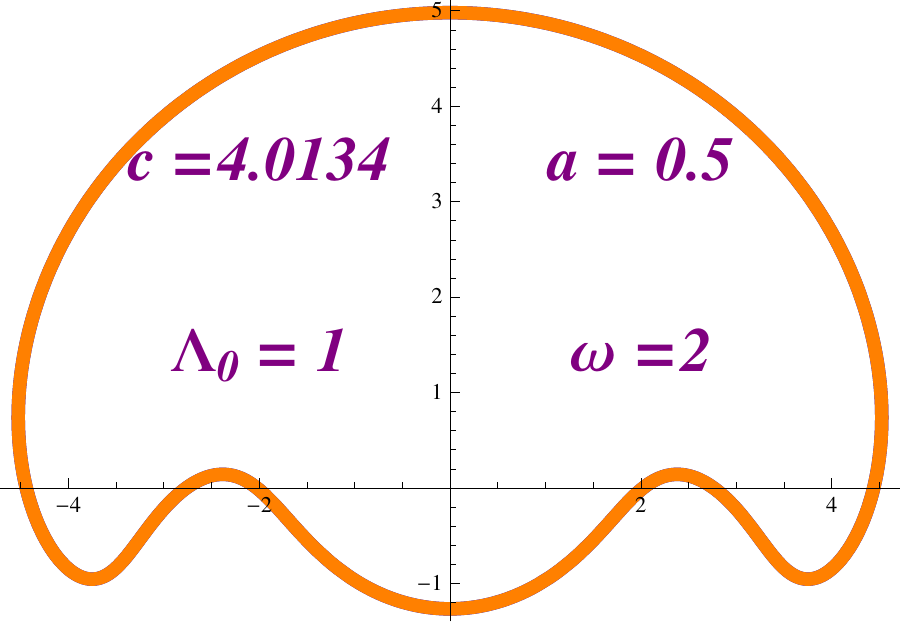}\hskip.3cm\includegraphics[width=3.5cm,height=3.5cm]{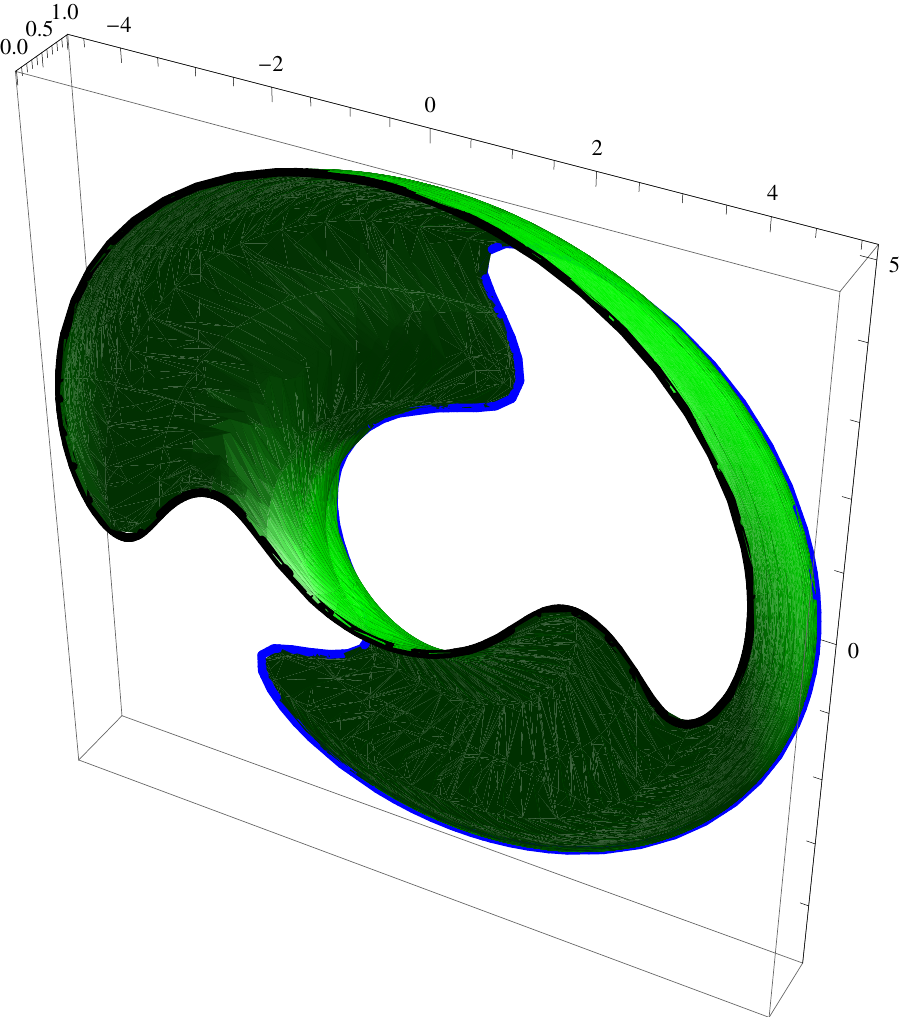}}
\caption{ The embedded helicoidal rotational drop obtained by solving the equation
 $\Delta\tilde{\theta}=2 \pi$ when $a=0.2$ and $\omega=2$. }
\label{hemb4}
\end{figure}



\vfil
\FloatBarrier

\section{Second variation}

For any sufficiently smooth surface, we define an invariant $\ell=2H+(a/2)R^2$. The first variation formula (\ref{fv})  restricted to  compactly supported variations
can then be expressed
$$\delta {\mathcal E}_{a,\Lambda_0}=-\int_\Sigma (\ell-\Lambda_0)\psi \:d\Sigma\:,$$
where $\psi:=\delta X\cdot \nu$. We assume that the surface is in equilibrium so that $\ell-\Lambda_0\equiv 0$ holds. The second variation is thus
$$\delta^2 {\mathcal E}_{a,\Lambda_0}=-\int_\Sigma \psi (\delta \ell)\:d\Sigma\:.$$
A well known formula for the pointwise variation of the mean curvature is
\begin{equation}
\label{dH}
2\delta H={\hat L}[\psi]+2\nabla H\cdot \delta X\:,\end{equation}
where ${\hat L}=\Delta +|d\nu|^2$.  Also
\begin{eqnarray}
\delta R^2=2\sum_{i=1,2} x_i\delta X\cdot E_i&=&2\sum_{i=1,2} x_i(\psi \nu_i+(\delta X)^T\cdot E_i)\nonumber \\
&=&2\psi {\hat Q}+2\nabla 'R^2\cdot (\delta X)^T\:\nonumber,
\end{eqnarray}
where ${\hat Q}=x_1\nu_1+x_2\nu_2$.  Combining this with (\ref{dH}), we have
\begin{equation}
\label{dl}
\delta \ell=L[\psi]+\nabla \ell\cdot T\:,
\end{equation}
where $L[\psi]=\Delta \psi +(|d\nu|^2+a{\hat Q})\psi\:$. Since we are assuming $\ell\equiv \Lambda_0=$ constant, the second term above vanishes and the second variation formula for variations vanishing on $\partial \Sigma$ then reads
\begin{equation}
\label{sv}
\delta {\mathcal E}_{a, \Lambda_0}=-\int_\Sigma  \psi L[\psi]\:d\Sigma =-\int_\Sigma \psi (\Delta \psi +(|d\nu|^2+a{\hat Q}\psi)\:d\Sigma=\int_\Sigma |\nabla \psi|^2-(|d\nu|^2+a{\hat Q})\psi^2\:d\Sigma\:.
\end{equation}
This formula can be found in \cite{RL}.
As usual, an equilibrium  surface  will be called  stable if the second variation is non negative for all compactly supported variations satisfying the additional condition
\begin{equation}
\label{zint}\int_\Sigma \psi\:d\Sigma =0\:.\end{equation}
This is just the first order condition which is necessary and sufficient for the variation to be volume preserving.

For a part of the surface of the form $\alpha \times [-h/2, h/2]$ this can be written
\begin{eqnarray}
\label{AA}
\delta^2 {\mathcal E}_{a, \Lambda_0} &=& 
 \int_\alpha  \int _{-h/2}^{h/2} \biggl( \frac{1}{\sqrt{1+\omega^2\xi_1^2}}( [1+\omega^2 R^2]\psi_s^2 -2\omega \xi_2\psi_s\psi_t+\psi_t^2)\\ \nonumber
&& - (4H^2-2K+a\xi_2)\sqrt{1+\omega^2\xi_1^2}\psi^2\biggr)\:dt\:ds\:,
 \end{eqnarray}
 where $K$ denotes the Gaussian curvature.
Choosing  $\psi=\sin((2\pi t)/h)$, gives
$$ \int _\Sigma |\nabla \psi|^2\:d\Sigma= \frac{2\pi^2}{h}\int_\alpha  \frac{1}{\sqrt{1+\omega^2\xi_1^2}}\:ds\:.$$
In addition, for this choice of $\psi$, we have $\psi\equiv 0$ on the boundary and the mean value of $\psi$ on $\alpha \times [-h/2, h/2]$ is zero.

\begin{lem}
There holds
$$\int_\alpha K\sqrt{1+\omega^2\xi_1^2}\:ds=0\:,$$
and hence
$$\int _{\alpha \times [-h/2, h/2]}K\:d\Sigma =0\:.$$
\end{lem}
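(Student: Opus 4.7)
Proof plan. The strategy is to express $K\sqrt{1+\omega^2\xi_1^2}$ as the $s$-derivative of a function that is single-valued along $\alpha$, and then appeal to the fact that $\alpha$ is closed: this is implicit in the preceding discussion, since the test function $\psi=\sin(2\pi t/h)$ can only satisfy Dirichlet boundary conditions on $\alpha\times[-h/2,h/2]$ if $\alpha$ has no lateral ends. Because $K$ is a function of $s$ alone on a helicoidal surface, Fubini immediately reduces the second identity to the first via $\int_{\alpha\times[-h/2,h/2]} K\,d\Sigma = h\int_\alpha K\sqrt{1+\omega^2\xi_1^2}\,ds$.

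First I would compute the fundamental forms of the immersion $\phi(s,t)$ directly. The first fundamental form has $E=1$, $F=-\omega\xi_2$, $G=1+\omega^2(\xi_1^2+\xi_2^2)$, so $EG-F^2=1+\omega^2\xi_1^2$. Using the explicit Gauss map already recorded in the paper and computing $\phi_{ss}\cdot\nu$, $\phi_{st}\cdot\nu$, $\phi_{tt}\cdot\nu$, one finds
$$L=\frac{-\theta'}{\sqrt{1+\omega^2\xi_1^2}},\qquad M=\frac{\omega}{\sqrt{1+\omega^2\xi_1^2}},\qquad N=\frac{-\omega^2\xi_2}{\sqrt{1+\omega^2\xi_1^2}},$$
and hence
$$K=\frac{LN-M^2}{EG-F^2}=\frac{\omega^2(\theta'\xi_2-1)}{(1+\omega^2\xi_1^2)^{2}}.$$

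The crucial step is to invoke the identity $\xi_1'=1-\theta'\xi_2$ already proven in Section 2, which rewrites the numerator as $-\omega^2\xi_1'$. Consequently
$$K\sqrt{1+\omega^2\xi_1^2}=-\frac{\omega^2\xi_1'}{(1+\omega^2\xi_1^2)^{3/2}}=\omega\,\frac{d}{ds}\!\left(\frac{-\omega\xi_1}{\sqrt{1+\omega^2\xi_1^2}}\right)=\omega\,\frac{d\nu_3}{ds},$$
since the paper records $\nu_3=-\omega\xi_1/\sqrt{1+\omega^2\xi_1^2}$. The integral over the closed curve $\alpha$ then collapses to $\omega\int_\alpha (d\nu_3/ds)\,ds=0$, and the second identity follows by the Fubini reduction. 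The only non-routine step is spotting that the Gauss curvature expression contains exactly the combination $\theta'\xi_2-1=-\xi_1'$ which makes the integrand exact; once this is noticed, the computation reduces to a boundary term that vanishes by periodicity on the closed profile curve.
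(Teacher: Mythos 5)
Your proof is correct and follows essentially the same route as the paper's: both reduce to the formula $K=-\omega^{2}(\xi_1)_s/(1+\omega^{2}\xi_1^{2})^{2}$ (the paper quotes it from \cite{P1}, you rederive it from the fundamental forms) and then observe that $K\sqrt{1+\omega^{2}\xi_1^{2}}$ is the $s$-derivative of $\omega^2\xi_1/\sqrt{1+\omega^{2}\xi_1^{2}}$, hence integrates to zero since $\xi_1$ returns to its initial value over a fundamental piece.
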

\begin{proof} From calculations found in \cite{P1}, one finds

$$K= \frac{-\omega^2(1+\kappa \xi_2)}{(1+\omega^2\xi_1^2)^{2}}=\frac{-\omega^2(\xi_1)_s} {(1+\omega^2\xi_1^2)^{2}}$$
$$\int_\alpha K\sqrt{1+\omega^2\xi_1^2}\:ds=\int_\alpha \frac{-\omega^2(\xi_1)_s} {(1+\omega^2\xi_1^2)^{3/2}}\:ds=0\:,$$
since the last integrand is the $s$ derivative  of a function of $\xi_1$. \end{proof}

\begin{prop}
A necessary condition for the stability of  $\alpha \times [-h/2, h/2]$ for the fixed boundary problem is that
\begin{equation}
\label{bound1}  \frac{2\pi^2}{h^2}\int_\alpha  \frac{1}{\sqrt{1+\omega^2\xi_1^2}}\:ds\ge \int_\alpha 4H^2 \sqrt{1+\omega^2\xi_1^2}\:ds +a{\mathcal A}\:\end{equation}
holds. Equivalently, this can be expressed as
\begin{equation}
\label{bound2} \frac{2\pi^2}{h^2}\int_\alpha  \frac{1}{\sqrt{1+\omega^2\xi_1^2}}\:ds\ge \int_{ \alpha \times [-h/2, h/2]} 4H^2 \:d\Sigma +a{\mathcal V}(\alpha \times [-h/2, h/2])\:.\end{equation}
\end{prop}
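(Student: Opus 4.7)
The plan is to probe the stability inequality with a specific admissible test function of the form $\psi(s,t)=\sin(2\pi t/h)$. This choice vanishes at $t=\pm h/2$, so it honors the Dirichlet boundary condition, and its mean $\int_{-h/2}^{h/2}\psi\,dt$ is zero, so the integral constraint (\ref{zint}) holds. Stability therefore forces $\delta^2\mathcal{E}_{a,\Lambda_0}[\psi]\geq 0$, and the whole proposition will come from plugging this $\psi$ into the expanded formula (\ref{AA}).

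The evaluation is mechanical. Since $\psi_s\equiv 0$, only the $\psi_t^2/\sqrt{1+\omega^2\xi_1^2}$ term from the gradient piece survives, and the $t$-integral $\int_{-h/2}^{h/2}\psi_t^2\,dt=2\pi^2/h$ reproduces the identity for $\int_\Sigma|\nabla\psi|^2\,d\Sigma$ already displayed in the text. For the remaining piece of (\ref{AA}) the integrand $(4H^2-2K+a\xi_2)\sqrt{1+\omega^2\xi_1^2}$ is $t$-independent, so the $t$-integration reduces to $\int_{-h/2}^{h/2}\psi^2\,dt=h/2$, leaving a weighted curve integral of $4H^2-2K+a\xi_2$. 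Applying the lemma just proved eliminates the Gauss curvature contribution, and rearranging the resulting inequality yields (\ref{bound1}).

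The alternative form (\ref{bound2}) follows by converting the curve integrals back to surface integrals via the area element $d\Sigma=\sqrt{1+\omega^2\xi_1^2}\,dt\,ds$: for the curvature term this gives $\int_\Sigma 4H^2\,d\Sigma = h\int_\alpha 4H^2\sqrt{1+\omega^2\xi_1^2}\,ds$, and for the forcing term one uses $W_0\cdot\nu = \hat Q/2 = \xi_2/(2\sqrt{1+\omega^2\xi_1^2})$ together with the definition $\mathcal{V}=\int_\Sigma W_0\cdot\nu\,d\Sigma$ to identify the curve integral of $\xi_2$ with a multiple of the enclosed volume. The computation is entirely routine; the only places where care is needed are bookkeeping the $h$-factors when passing between the two formulations and the volume identification at the end, both of which rest on the divergence-theorem setup from the introduction and on the vanishing lemma for the $K$-integral.
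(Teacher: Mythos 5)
Your proposal is correct and follows exactly the paper's argument: the paper likewise tests the second variation with $\psi=\sin(2\pi t/h)$, notes that it satisfies the Dirichlet and zero-mean conditions, and concludes from (\ref{AA}) together with the lemma that $\int_\alpha K\sqrt{1+\omega^2\xi_1^2}\,ds=0$. Your write-up simply makes explicit the $t$-integrations and the identification of the $\xi_2$ term with $a\mathcal{V}$ via $W_0\cdot\nu=\hat Q/2$, details the paper leaves to the reader.
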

\begin{proof}
We choose   $\psi=\sin((2\pi t)/h)$ in the second variation formula. For this choice of $\psi$, we have $\psi\equiv 0$ on the boundary and the mean value of $\psi$ on $\alpha \times [-h/2, h/2]$ is zero.
The result then follows directly from (\ref{AA}) and the previous lemma.
\end{proof}
The bound (\ref{bound1}) gives a condition on the maximum height of a stable helicoidal surface in terms of the geometry of the generating curve.

There is no possible way to obtain a positive lower bound for the right hand side of (\ref{bound1}). For a round cylinder of radius $R$, the equation
$\frac{2\xi_2}{\sqrt{1+\omega^2\xi_1^2}}+\Lambda_0R^2-\frac{aR^4}{4}=c$ becomes $2R+\Lambda_0R^2-\frac{aR^4}{4}=c$, so for arbitrary $a$, we can simply define $c$ by this equation so and hence the cylinder will be an equilibrium surface. For a cylinder, the potential in the second variation formula is
$$4H^2-2K+a{\hat Q}=\frac{1}{R^2}+aR\:,$$
so for $a<<0$ the potential is non positive and the cylinder is  stable for arbitrary heights.

We will now give an upper bound for the height of a stable helicoidal equilibrium surface which is valid for any such surface which is not a cylinder over a planar curve.  This upper bound will only depend on the generating curve. In 
\cite{PP}, this estimate is modified so that it applies to non circular cylindrical equilibrium surfaces as well. 
\begin{thm}
For a helicoidal surface which is not a round cylinder, a necessary condition for the stability of the part the surface between horizontal planes separated by a distance $h$ is that 
\begin{equation}
\label{bb}
\frac{4\pi^2e^4}{h^2}\ge \frac{\omega^2\oint_\alpha \frac{(1+\omega^2R^2(1+\kappa \xi_2)^2}{(1+\omega^2\xi_1^2)^{7/2}}\:ds}{\oint_\alpha\frac{1}{1+\omega^2\xi_1^2}\:ds}(\ge \omega^2),
\end{equation}
holds. The result also holds true if $a=0$, i.e. if the surface has constant mean curvature.
\end{thm}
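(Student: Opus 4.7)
The plan is to produce an admissible test function $\psi$ for the constrained second-variation quadratic form so that $\delta^2\mathcal{E}[\psi]<0$ whenever (\ref{bb}) fails, which makes (\ref{bb}) a necessary condition for stability. The key ingredient is the Jacobi function $\nu_3=-\omega\xi_1/\sqrt{1+\omega^2\xi_1^2}$. Because vertical translation by $E_3$ is a Killing field of $\mathbb{R}^3$ that preserves both $H$ and $R^2$ (and hence $\ell=2H+(a/2)R^2$), its normal component $\nu_3$ satisfies $L\nu_3=0$. Since $\nu_3$ depends only on $s$, multiplying $L\nu_3=0$ by $\nu_3\sqrt{1+\omega^2\xi_1^2}$ and integrating by parts in $s$ yields the identity
\begin{equation*}
\oint_\alpha \frac{(1+\omega^2R^2)(\nu_3')^2}{\sqrt{1+\omega^2\xi_1^2}}\,ds \;=\; \oint_\alpha \bigl(|d\nu|^2+a\hat Q\bigr)\,\nu_3^2\,\sqrt{1+\omega^2\xi_1^2}\,ds.
\end{equation*}
A direct computation also gives $(\nu_3')^2=\omega^2(1+\kappa\xi_2)^2/(1+\omega^2\xi_1^2)^3$ and $1-\nu_3^2=1/(1+\omega^2\xi_1^2)$, so the integrands appearing in (\ref{bb}) are naturally expressed in terms of $\nu_3$; the hypothesis that $\Sigma$ is not a cylinder is exactly $\nu_3\not\equiv 0$.

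I would then take the separated test function $\psi(s,t)=u(s)\sin(2\pi t/h)$, which vanishes at $t=\pm h/2$ and automatically satisfies $\int_\Sigma\psi\,d\Sigma=0$ because $\sin(2\pi t/h)$ has zero mean on $[-h/2,h/2]$; no Lagrange-multiplier correction is needed. Substituting into (\ref{AA}), the cross term $-2\omega\xi_2\psi_s\psi_t$ integrates to $0$ in $t$ by orthogonality of $\sin$ and $\cos$, and the remaining $t$-integrals collapse $\delta^2\mathcal{E}[\psi]$ to a purely one-dimensional expression in $u$ on $\alpha$. The stability condition then becomes, for every $u$,
\begin{equation*}
\frac{4\pi^2}{h^2}\oint_\alpha \frac{u^2}{\sqrt{1+\omega^2\xi_1^2}}\,ds \;\geq\; \oint_\alpha (|d\nu|^2+a\hat Q)\,u^2\sqrt{1+\omega^2\xi_1^2}\,ds \;-\; \oint_\alpha \frac{(1+\omega^2R^2)(u')^2}{\sqrt{1+\omega^2\xi_1^2}}\,ds.
\end{equation*}

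Next, I would pick $u$ so as to realize the ratio appearing in (\ref{bb}). Because the identity above forces the right-hand side to vanish when $u=\nu_3$, the choice must be a calibrated perturbation of $\nu_3$; a sharp weighted Cauchy--Schwarz / Poincar\'e-type estimate on $\alpha$ then converts the 1D Rayleigh quotient into $\omega^2\oint_\alpha(1+\omega^2R^2)(1+\kappa\xi_2)^2/(1+\omega^2\xi_1^2)^{7/2}\,ds$ over $\oint_\alpha 1/(1+\omega^2\xi_1^2)\,ds$, with a numerical loss of $e^{-4}$ in the comparison, producing the stated bound. The parenthetical inequality $(\geq\omega^2)$ then follows from an elementary comparison on this ratio using $1+\omega^2R^2\geq 1+\omega^2\xi_1^2$ and the expression of $(1+\kappa\xi_2)^2$ in terms of $(\nu_3')^2$.

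The hard part is obtaining the sharp constant $e^4$. The identity $\oint(1+\omega^2R^2)(\nu_3')^2/\sqrt{1+\omega^2\xi_1^2}\,ds=\oint(|d\nu|^2+a\hat Q)\nu_3^2\sqrt{1+\omega^2\xi_1^2}\,ds$ means the canonical test function $u=\nu_3$ extracts no negative contribution at all, so the argument must perturb $\nu_3$ in a way that the deviation from cancellation is quantitatively controlled by exactly the ratio on the right-hand side of (\ref{bb}). Producing the specific constant $e^4$ (rather than a larger one) requires a careful analysis of the associated weighted Sturm--Liouville eigenvalue problem on $\alpha$, and this balancing step is the technical heart of the proof.
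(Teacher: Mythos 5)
Your setup coincides with the paper's up to the point where the real work starts: you correctly identify $\nu_3$ as a Jacobi function depending only on $s$, use the separated test function $u(s)\sin(2\pi t/h)$ to kill the cross term and the volume constraint, and reduce the second variation to a one-dimensional quadratic form on $\alpha$. But the proposal then defers the decisive step -- the actual choice of $u$ -- to a ``calibrated perturbation of $\nu_3$'' and an unspecified ``careful analysis of the associated weighted Sturm--Liouville eigenvalue problem.'' That deferred step is the entire content of the theorem, so as written there is a genuine gap: nothing in your plan actually produces the inequality (\ref{bb}), and it is not clear that the route you sketch would terminate in the stated constant.

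The missing idea in the paper is concrete and elementary: take $\psi=e^{\nu_3(s)}\sin((2\pi t)/h)$. The exponential (Riccati-type) substitution, combined with ${\mathcal L}[\nu_3]=0$, removes the potential $|d\nu|^2+a{\hat Q}$ from the quadratic form and leaves
\begin{equation*}
\delta^2 {\mathcal E}_{a, \Lambda_0}=\frac{2\pi^2}{h}\oint_\alpha \frac{e^{2\nu_3}}{1+\omega^2\xi_1^2}\:ds-\frac{h}{2}\oint_\alpha \frac{e^{2\nu_3}(1+\omega^2R^2)}{\sqrt{1+\omega^2\xi_1^2}}\bigl((\nu_3)_s\bigr)^2\:ds\:.
\end{equation*}
The constant $e^4$ then requires no spectral analysis and is not sharp: it is simply $e^{2}/e^{-2}$, coming from the crude two-sided bound $e^{-2}\le e^{2\nu_3}\le e^{2}$, which holds because $|\nu_3|\le 1$. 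Your diagnosis that obtaining $e^4$ is ``the technical heart of the proof'' misidentifies where the constant comes from. Relatedly, the integrated identity you derive (multiplying ${\mathcal L}[\nu_3]=0$ by $\nu_3$ and integrating, i.e.\ $Q(\nu_3)=0$) is true but is not the form of the Jacobi-field information the argument uses; the paper needs the pointwise computation of ${\mathcal L}[e^{\nu_3}]$, after which the substitution $(\nu_3)_s=\omega(1+\kappa\xi_2)(1+\omega^2\xi_1^2)^{-3/2}$ and the hypothesis $\nu_3\not\equiv 0$ give (\ref{bb}) directly.
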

{\bf Remark.} In \cite{PP} a similar estimate is given for cylindrical surfaces which are not round cylinders.\\[2mm]
\begin{proof} To begin, note that the the third component of the normal $\nu_3$ satisfies $L[\nu_3]=0$ since vertical translation is a symmetry of the normal.  Also, this function will vanish identically if and only if the surface is a cylinder. 

The function $\nu_3$ can be written \cite{P1}
$$\nu_3=\frac{\omega\xi_1}{\sqrt{1+\omega^2\xi_1^2}}\:,$$
so $\nu_3$ is a function of $s$ only. Using local coordinate expressions found in \cite{P1}, we can write
\begin{eqnarray*}
0&=&L[\nu_3]\\
&=& \frac{1}{\sqrt{g}}(\sqrt{g}g^{11}(\nu_3)_s)_s+(|d\nu|^2+a\xi_2)\nu_3\\
&=& \frac{1}{\sqrt{1+\omega^2\xi_1^2}}\bigl[\frac{1+\omega^2R^2}{\sqrt{1+\omega^2\xi_1^2}}(\nu_3)_s\bigr]_s+(|d\nu|^2+a\xi_2)\nu_3\\
&=:& {\mathcal L}[\nu_3]\:.
\end{eqnarray*}
Note that $(|d\nu|^2+a\xi_2)$ only depends on $s$. For any smooth function $u=u(s)$, there holds
$${\mathcal L} [e^{u}]=e^u({\mathcal L}[u]+g^{11}u_s^2)=e^u({\mathcal L}[u]+(1+\omega R^2)u_s^2)\:.$$

If we now take $\psi=e^{\nu_3(s)}\sin((2\pi t)/h)$, then (\ref{zint})  holds and we can obtain from (\ref{AA}),
$$\delta^2 {\mathcal E}_{a, \Lambda_0}=\frac{2\pi^2}{h}\oint_\alpha \frac{e^{2\nu_3}}{1+\omega^2\xi_1^2}\:ds-
\frac{h}{2}\oint_\alpha \frac{e^{2\nu_3}(1+\omega^2R^2)}{\sqrt{1+\omega^2\xi_1^2}}((\nu_3)_s)^2\:ds$$ 
Using $-1\le \nu_3\le 1$  and using
$$(\nu_3)_s=(\omega \xi_1(1+\omega^2\xi_1^2)^{-1/2})_s=(\xi_1)_s \omega(1+\omega^2\xi_1^2)^{-3/2}
=\omega(1+\kappa \xi_2)(1+\omega^2\xi_1^2)^{-3/2},$$
yields the result. \end{proof}
\section{Appendix}

We assume that $\Sigma$ is contained in a three dimensional region $\Omega$ and that $\partial \Sigma$ is contained in a supporting surface $S$ which is part of
$\partial \Omega$. We assume that there is a (not necessarily connected) domain $S_1\subset S$ such that $\Sigma \cup S_1$ bounds a subregion $\Omega_1\subset \Omega$.
The volume of $\Omega_1$ will be denoted by ${\mathcal V}$.

Let $\phi$ be a solution of $\Delta'\phi=1$ in $\Omega$ with $\nabla'\phi \cdot N=0$ on $S$ where $N$ is the outward pointing normal to $S$. This boundary value problem is undertermined and is solvable provided $S$ is not closed.

We subject the surface to a variation which keeps $\partial \Sigma$ on $S$. Then $\delta X=:T+\psi \nu \perp N$ along $\partial \Sigma$ and
$${\mathcal V}=\int_\Sigma \nabla'\phi \cdot \nu\:d\Sigma\:.$$

We have 
\begin{eqnarray*}
\delta {\mathcal V}&=&\int_\Sigma \nabla '_{T+\psi \nu}\nabla '\phi \cdot \nu+\nabla '\phi \cdot \delta \nu \:d\Sigma +\int_\Sigma \nabla'\phi \cdot \nu(\nabla \cdot T-2H\psi)\:d\Sigma \\
&=& \int_\Sigma \psi \nabla'_\nu \nabla '\phi\cdot \nu -2H\psi \nabla '\phi \cdot \nu -\nabla \phi \cdot \nabla \psi \:d\Sigma +\oint_{\partial \Sigma} (\nabla '\phi \cdot \nu)T\cdot n\:ds \\
&=& \int_\Sigma \psi \nabla'_\nu \nabla '\phi\cdot \nu -2H\psi \nabla '\phi \cdot \nu +\psi \Delta \phi\:d\Sigma\\
&& +\oint_{\partial \Sigma} ((\nabla '\phi \cdot \nu)T-\psi \nabla \phi)\cdot n\:ds \:.
\end{eqnarray*}
A well known formula relating the Laplacian on a submanifold to the Laplacian on the ambient space gives $\Delta '\phi=\nabla'_\nu \nabla '\phi\cdot \nu -2H\nabla '\phi +\Delta \phi$. Therefore we obtain 
$$\delta {\mathcal V}=\int_\Sigma \psi \:d\Sigma +\oint dX\times \nabla'\phi \cdot \delta X\:.$$
However, all of $dX$,$ \nabla'\phi $ and $\delta X$ are perpendicular to $N$ on $\partial \Sigma$, so the line integral above vanishes.

To obtain (\ref{dR}), we let $W$ be a vector field on $\Omega$ satisfying $\nabla '\cdot W  =R^2$ and $W\cdot N=0$ along $S$. This boundary value problem is undertermined and is solvable provided $S$ is not closed. 
 Then by the Divergence Theorem
$$\int_{\Omega_1}R^2\:d^3x=\int_\Sigma W\cdot \nu\:d\Sigma \:.$$
\begin{eqnarray*}
\delta \int_{\Omega_1}R^2\:d^3x&=& \int_\Sigma \nabla '_{T+\psi \nu}W\cdot \nu +W\cdot \delta \nu \:d\Sigma +\int_\Sigma W\cdot \nu \:(\nabla\cdot T-2H\psi)\:d\Sigma\\
 &=&\int_\Sigma \psi \nabla'_\nu W\cdot \nu -W\cdot \nabla \psi-2H\psi W\cdot \nu\:d\Sigma +\oint_{\partial \Sigma} (W\cdot \nu)T\cdot n\:ds\\
 &=&\int_\Sigma \psi \nabla '\cdot W\:d\Sigma +\oint_{\partial \Sigma}( (W\cdot \nu)T-\psi W)\cdot n\:ds\\
 &=&\int_\Sigma \psi R^2\:d\Sigma +\oint_{\partial \Sigma}dX\times W\cdot \delta X\:.
 \end{eqnarray*}

Again, all of $dX$, $W$ and $\delta X$ are perpendicular to $N$ along $\partial \Sigma$ so the line integral will vanish.

If the pair $(\phi,W)$ used above are replaced by another pail $({\underline \phi}, {\underline W})$ satisfying the same equations:
$\Delta' {\underline \phi}=1$ and $\nabla'{\underline W}=R^2$, then by the Divergence Theorem 
$$\int_\Sigma \nabla'{\underline \phi} \cdot \nu\:d\Sigma={\mathcal V}+c_1\:,$$
$$\int_\Sigma W\cdot \nu\:d\Sigma =\int_{\Omega_1}R^2\:d^3x+c_2\:,$$
for constants $c_1$ and $c_2$. Thus these replacements will not affect the variational formulas for these integrals.

\end{document}